\documentclass[11pt]{amsart}
\usepackage{amsmath}
\usepackage{amssymb}
\usepackage{graphicx}
\usepackage{url}
\usepackage{multirow}

\usepackage{algorithm}
\usepackage{algorithmic}
\usepackage{float}
\usepackage{comment}
\usepackage[font=small,labelfont=bf]{caption}
\usepackage{subcaption}

\usepackage{color}
\usepackage{hyperref}
\usepackage{multicol}

\newtheorem{theorem}{Theorem}[section]

\newtheorem{lemma}[theorem]{Lemma}

\theoremstyle{definition}
\newtheorem{definition}[theorem]{Definition}

\newtheorem{remark}[theorem]{Remark}

\newtheorem{alg}[theorem]{Algorithm}

\numberwithin{equation}{section}

\title[Quasar Convex problems with functional constraints]{Mirror Descent Methods for Quasar Convex Optimization Problems With Non-Smooth Inequality
Constraints}

\author[M.~S.~Alkousa]{Mohammad S. Alkousa}
\address[M.~S.~Alkousa]{Innopolis University, Russia.}
\email{\tt m.alkousa@innopolis.ru}

\thanks{Preprint under updating.}

\keywords{Non-convex problems, Non-smooth problems, Quasar convex functions, Problems with functional constraints, Mirror descent, (Non-)Productive step.}

\begin{document}

\begin{abstract}
In this paper, we consider constraint optimization problems subject to non-smooth convex functional (inequality-type) constraints, wherein the objective function is non-smooth and quasar convex. We propose and analyze two groups of algorithms, each consisting of a standard version and a modified variant, that operate by switching between two types of iteration points: productive and non-productive. Within each group, we develop distinct mirror descent-type algorithms for both deterministic and stochastic settings, and we establish their convergence rates.
\end{abstract}

\maketitle

\section{Introduction}\label{sect:intro}

The optimization of non-smooth functions subject to constraints arises frequently in large-scale problems and their applications \cite{Nemirovski_Robust,Shpirko_primal}. Several methods exist for solving such problems, including mirror descent methods, which originated in \cite{Nemirovskii1979efficient,Nemirovsky1983Complexity}. The mirror descent method is widely regarded as a non-Euclidean generalization of subgradient methods, the latter being considered in \cite{shor1967generalized} for deterministic unconstrained problems in Euclidean settings, and for constrained problems in \cite{polyak1967general}. An extension of the mirror descent method to constrained problems was proposed in \cite{article:beck_comirror_2010,Nemirovsky1983Complexity}. Furthermore, the mirror descent method is also applicable to optimization problems in Banach spaces, where gradient descent is not directly applicable \cite{article:doan_2019}.

In \cite{Bayandina2018Mirror}, several optimal mirror descent algorithms were proposed for problems with functional constraints, featuring both adaptive step sizes and stopping rules. Subsequently, in \cite{stonyakin2018}, modifications of some of these algorithms were considered for the case of problems with many (large number) functional constraints. The literature on first-order methods for convex optimization problems with convex functional constraints includes works on the deterministic setting \cite{Bayandina2018Mirror,Fercoq2019Almost,stonyakin2018,stonyakin2019,Stonyakin2019Some,Savchuk2024some,Titov2019Mirror} as well as the stochastic setting \cite{Alkousa2020Modification,Alkousa2019some,Bayandina2017Stochastic,Zhou2020Algorithms,Xu2020Primal}.

A fundamental problem in contemporary machine learning and data science is the characterization of non-convex function classes that admit tractable optimization landscapes. Within this context, the class of quasar convex functions (see Definition \ref{defen_quasar}) has emerged as a prominent class, finding application in a broad spectrum of problems, ranging from linear dynamical system learning \cite{Hardt2018Gradient}, several generalized linear models \cite{Foster2018Uniform,Wang2023Continuized}, and phase retrieval \cite{Wang2019Robust} to empirical risk minimization \cite{Lee2016Optimizing} and general machine learning \cite{Farzin2025Minimisation,Wang2019Robust}. Despite a rich body of literature addressing the smooth regime (see, e.g., \cite{Guminov2023Accelerated,Hardt2018Gradient,Hermant2024Study,Hinder2020Near,Rubio2026smooth,Wang2019Robust}), the non-smooth counterpart remains comparatively underexplored (see, e.g., \cite{Ahookhosh2026Quasar,Brito2025Extending,Kabgani2026Robust}), particularly for problems involving functional constraints. This motivates the present study, which aims to advance the research in this direction.

The class of quasar convex functions was introduced in \cite{Hardt2018Gradient} under the name \textit{weak quasi-convexity}, wherein the authors provided an analysis of stochastic gradient descent for a weakly smooth quasar convex objective. Subsequently, \cite{Guminov2023Accelerated} proposed an accelerated algorithm for smooth quasar convex functions involving a search over a two-dimensional affine space. Later, \cite{Hinder2020Near} introduced an accelerated algorithm with line search, providing a bound on the runtime of a binary search procedure that required controlling the algorithm's iterates; the authors further established nearly matching lower bounds and introduced the term \textit{quasar convexity}. In a related development, \cite{Wang2023Continuized} extended the continuized acceleration framework originally developed in \cite{Even2021Continuized}. \cite{Lezane2024Mirror} generalized accelerated methods to non-Euclidean settings and function classes with weak smoothness. Finally, \cite{Jikai2020convergence} studied gradient norm minimization under the quasar-convexity assumption.

In this paper, we study non-convex optimization problems with convex functional constraints, where the objective function is non-smooth and quasar convex. We propose two groups of algorithms, each comprising distinct mirror descent-type methods in both deterministic and stochastic settings. In addition, we establish their convergence rates.

The paper consists of an introduction and four main sections. In Sect.~\ref{sect_basics}, we formulate the problem statement and recall the relevant fundamental facts and tools about mirror descent and the various components of the problems under consideration. Sect. \ref{section_group_I} is devoted to the first group of proposed algorithms (Algorithms \ref{alg:adaptive_mirror1} -- \ref{alg:stocadaptive_mirror1_mod}) along with their convergence analysis. In Sect. \ref{section_group_II}, analogously to the previous section, we analyze all proposed algorithms of the second group (Algorithms \ref{alg_determ_II} -- \ref{alg2_stoc_II} with their modifications). For each group, we propose a standard deterministic mirror descent algorithm together with its modification, as well as their stochastic counterparts.

\section{Fundamentals}\label{sect_basics}

Let $(\mathbf{E},\|\cdot\|)$ be a finite-dimensional normed vector space equipped with an arbitrary norm $\|\cdot\|$, and let $\mathbf{E}^*$ denote its conjugate space endowed with the dual norm
$$
    \|y\|_{*}=\max\limits_{x \in \mathbf{E}}\{\langle y, x\rangle: \|x\|\leq 1\},
$$
where $\langle y,x\rangle$ denotes the value of the continuous linear functional $y \in \mathbf{E}^*$ at $x \in \mathbf{E}$.

Let $Q \subset \mathbf{E}$ be a closed convex set, and let $d: Q \longrightarrow \mathbb{R}$ be a proper, closed, differentiable, and $1$-strongly convex function, referred to as a prox-function or distance-generating function. The associated Bregman divergence is defined as 
$$
    V_{x} (y) = V(y, x) := d(y) - d(x) - \langle \nabla d(x), y - x \rangle, \quad \forall x, y \in Q. 
$$
The Bregman divergence satisfies the following inequality
\begin{equation}\label{eq_breg}
    V(y, x) \geq \frac{1}{2} \|y - x\|^2, \quad \forall x, y \in Q. 
\end{equation}

In what follows, we denote by $\partial f(x)$ the subdifferential of a function $f$ at a point $x$, and by $\nabla f(x) \in \partial f(x)$ an arbitrary subgradient of $f$ at $x$.  

\begin{definition}[Quasar convex function \cite{Rubio2026smooth}]\label{defen_quasar}
For \(\gamma \in (0, 1]\), the function  \(f : Q \longrightarrow \mathbb{R}\) is called \(\gamma\)-quasar convex in $Q$  with respect to (or with center) \(x_* \in Q\), if $x_* \in \arg\min_{x \in Q} f(x)$, and
\begin{equation}\label{def_quasar}
    f(x_*) \geq f(x) + \frac{1}{\gamma} \langle \nabla f(x), x_* - x \rangle\quad \forall x \in Q.
\end{equation}

If $\gamma = 1$, the function is said to be star-convex \cite{Lee2016Optimizing,Nesterov2006Cubic}. Also, if $\gamma = 1$ and \eqref{def_quasar} holds for any $y \in Q$ rather than only for the center $x_*$, we conclude that $f$ is a convex function. 
\end{definition}

\textbf{Problem setting.} In this paper, we focus on a general class of problems, which is a class of optimization problems with functional (inequality-type) constraints 
\begin{equation}\label{general_problem_1}
    \min\left\{f(x): \;\; x \in Q \;\;\text{and} \;\; g_i(x)\leq 0 \;\; \text{for all} \;\; i = 1 , 2, \ldots, m \right\},
\end{equation}
where $f$ is a $\gamma$-quasar convex and $g_i$ (for all $i =1, \ldots, m$) are non-smooth convex functions given on $Q$. 

It is clear that instead of a set of functions  $\{g_i(\cdot)\}_{i=1}^{m}$ we can see one functional constraint $g: Q \longrightarrow \mathbb{R}$, such that $g(x) = \max_{1 \leq i \leq m}\{g_i(x)\}$. Therefore, by this setting, the problem \eqref{general_problem_1} will be equivalent to the following constrained minimization problem  
\begin{equation}\label{general_problem}
    \min\limits_{x\in Q,\, g(x) \leq 0} f(x).
\end{equation} 

We consider problem \eqref{general_problem} when the objective function \(f\) and the inequality constraint \(g\) are both non-smooth and Lipschitz-continuous, that is there exist $M_f > 0, M_g > 0$, such that
\begin{equation}\label{Lip_f}
    |f(x) - f(y)| \leq M_f \|x - y\| \quad \forall x, y \in Q,
\end{equation}
\begin{equation}\label{Lip_g}
    |g(x) - g(y)| \leq M_g \|x - y\| \quad \forall x, y \in Q.
\end{equation}
This means that $\|\nabla f(x)\|_* \leq M_f$ and $\|\nabla g(x)\|_* \leq M_g$, for all $x \in Q$. 

We also introduce an assumption: there exist a constant \(\Theta_0 > 0\), such that
\begin{equation}\label{bound_bregman}
    \sup_{x, y \in Q} V(y, x) \leq \Theta_0^2.
\end{equation}

We assume that the problem \eqref{general_problem} (and \eqref{general_problem_1}) is regular or feasible. This means that there exists a point \(x_0\) in the interior of \(Q\) (i.e., $x \in Q^{\circ}$) such that \(g(x_0) \leq 0\) (this means that $g_i(x_0) \leq \varepsilon\, \forall i = 1, \ldots, m$). 

Let $x_*$ be a solution of \eqref{general_problem}, we say that $\hat{x} \in Q$ is an $\varepsilon$-solution to \eqref{general_problem} if $f(\hat{x}) - f(x^*) \leq \varepsilon$ and $g(\hat{x}) \leq \varepsilon$. 

The proximal mapping operator is defined as follows
$$
    \operatorname{Mirr}_x(p) = \arg\min_{u \in Q} \left\{ \langle p, u \rangle + V(u, x) \right\} \quad \text{for each } x \in Q, \, p \in \mathbf{E}^*. 
$$
We assume, for simplicity, that $\operatorname{Mirr}_x(p)$ is easily computable.

\medskip 

In what follows, we present a lemma that plays a key role in the analysis of the proposed algorithms (in a deterministic setting) in the next sections. 

\begin{lemma}\label{main_lemma_determ}
Let \(f : Q \longrightarrow \mathbb{R}\), \(h > 0\), \(x \in Q^{\circ}\), and \(z = \operatorname{Mirr}_x(h\nabla f(x))\). Then for any \(y \in Q\), it holds
\begin{equation}\label{main_lemma_1}
    h\langle \nabla f(x), x - y \rangle \leq \frac{h^2}{2} \|\nabla f(x)\|_{*}^2 + V(y, x) - V(y, z).
\end{equation}
If \(y = x_*\) and \(f\) is \(\gamma\)-quasar convex, then it holds the following
\begin{align}
    h(f(x) - f(x_*)) & \leq \frac{h}{\gamma} \langle \nabla f(x), x - x_* \rangle  \label{eq_1lemma}
    \\& \leq \frac{1}{\gamma} \left( \frac{h^2}{2} \|\nabla f(x)\|_{*}^2 + V(x_*, x) - V(x_*, z) \right). \label{eq_2lemma}
\end{align}
\end{lemma}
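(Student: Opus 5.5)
The plan is the standard mirror descent one-step analysis: write down the optimality condition of the prox step and rearrange it with the three-point identity for the Bregman divergence. Since $z = \operatorname{Mirr}_x(h\nabla f(x))$ minimizes the convex function $u \mapsto \langle h\nabla f(x), u\rangle + V(u,x)$ over the closed convex set $Q$, and $\nabla_u V(u,x) = \nabla d(u) - \nabla d(x)$, the first-order optimality condition gives
\begin{equation*}
\langle h\nabla f(x) + \nabla d(z) - \nabla d(x),\, y - z\rangle \geq 0 \quad \forall y \in Q .
\end{equation*}

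Next I use the three-point identity, which follows directly from the definition of $V$:
\begin{equation*}
\langle \nabla d(z) - \nabla d(x),\, y - z\rangle = V(y,x) - V(y,z) - V(z,x).
\end{equation*}
Substituting it into the optimality condition yields
\begin{equation*}
h\langle \nabla f(x), z - y\rangle \leq V(y,x) - V(y,z) - V(z,x).
\end{equation*}

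To recover \eqref{main_lemma_1}, split $h\langle \nabla f(x), x-y\rangle = h\langle \nabla f(x), x - z\rangle + h\langle \nabla f(x), z - y\rangle$. The first term is bounded by H\"older's inequality and then Young's inequality:
\begin{equation*}
h\langle \nabla f(x), x - z\rangle \leq h\|\nabla f(x)\|_*\|x-z\| \leq \frac{h^2}{2}\|\nabla f(x)\|_*^2 + \frac12\|x-z\|^2 .
\end{equation*}
By \eqref{eq_breg}, $\frac12\|x-z\|^2 \leq V(z,x)$, so this quadratic term cancels the $-V(z,x)$ left over from the previous step. What remains is exactly \eqref{main_lemma_1}.

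For the second part, I apply Definition \ref{defen_quasar} at the point $x$ and rearrange:
\begin{equation*}
f(x) - f(x_*) \leq \frac{1}{\gamma}\langle \nabla f(x), x - x_*\rangle .
\end{equation*}
Multiplying by $h>0$ gives \eqref{eq_1lemma}. Taking $y = x_*$ in \eqref{main_lemma_1} and multiplying by $1/\gamma > 0$ then gives \eqref{eq_2lemma}.

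The only delicate step is the optimality condition. Because the minimization is constrained to $Q$, it must be stated as a variational inequality rather than as a vanishing gradient. It also requires $d$ to be differentiable at $z$; this holds under the standing assumption that $d$ is differentiable on $Q$ (and $x \in Q^{\circ}$ makes $\nabla d(x)$ unambiguous). Everything else is routine algebra.
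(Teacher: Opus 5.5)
Your proof is correct and takes essentially the same route as the paper. The paper cites the literature for \eqref{main_lemma_1}, whose standard proof is exactly your prox optimality condition combined with the three-point identity and the Young/strong-convexity cancellation; it then obtains \eqref{eq_1lemma} from \eqref{def_quasar}, and \eqref{eq_2lemma} by setting $y=x_*$ in \eqref{main_lemma_1} and dividing by $\gamma$ (as you note, only $\gamma>0$ is needed for this last step, not $\gamma\le 1$).
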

\begin{proof}
The proof of \eqref{main_lemma_1} can be found, for instance, in \cite{Bayandina2018Mirror}. Inequality \eqref{eq_1lemma} follows from the quasar convexity of the function $f$, see \eqref{def_quasar}. Inequality \eqref{eq_2lemma} is a direct consequence of \eqref{main_lemma_1} with $y = x_*$, together with the fact that $\gamma \in (0, 1]$. 
\end{proof}

\section{Mirror Descent Algorithms for Quasar Convex Problems With Functional Constraints (Group I)}\label{section_group_I}

\subsection{Standard Adaptive Deterministic Mirror Descent Algorithm for Problems with One Functional Constraint.}

In this subsection, we propose an adaptive algorithm in the deterministic setting for solving problem \eqref{general_problem}. The algorithm is presented as Algorithm \ref{alg:adaptive_mirror1}. While this algorithm was originally proposed in \cite{Viscarra2026Convergence}, the convergence analysis presented here is entirely different and novel.

The output point of Algorithm \ref{alg:adaptive_mirror1} is selected from among the iterates $x_k$ satisfying $g(x_k) \leq \varepsilon$. Accordingly, a step $k$ is termed \textit{productive} if $g(x_k) \leq \varepsilon$; conversely, if the inequality is reversed, i.e., $g(x_k) > \varepsilon$, the step is termed \textit{non-productive}. Let $I$ and $J$ denote the sets of productive and non-productive steps, respectively, and let $|I|$ and $|J|$ denote their respective cardinalities.

\begin{algorithm}
\caption{Standard Adaptive Deterministic Mirror Descent Algorithm for Problems with One Functional Constraint.}
\label{alg:adaptive_mirror1}
\begin{algorithmic}[1]
\REQUIRE accuracy \(\varepsilon > 0\), starting point $x_0$, prox-function $d(\cdot)$, \(\Theta_0\) s.t. \eqref{bound_bregman} holds.
\STATE Initialize the set \(I\) as empty set.
\STATE Set \(k = 0\).
\REPEAT
\IF{\(g(x_k) \leq \varepsilon\)}
    \STATE \(M_k = \|\nabla f(x_k)\|_{*}\),
    \STATE \(h_k = \Theta_0 \left( \sum_{i=0}^k M_i^2 \right)^{-1/2}\),
    \STATE \(x_{k+1} = \operatorname{Mirr}_{x_k} \left( h_k \nabla f(x_k) \right)\), \quad “productive step”
    \STATE Add \(k\) to \(I\).
    \ELSE
    \STATE \(M_k = \|\nabla g(x_k)\|_{*}\),
    \STATE \(h_k = \Theta_0 \left( \sum_{i=0}^k M_i^2 \right)^{-1/2}\),
    \STATE \(x_{k+1} = \operatorname{Mirr}_{x_k} \left( h_k \nabla g(x_k) \right)\). \quad “non-productive step”
\ENDIF
\STATE Set \(k = k + 1\).
\UNTIL{\(k \geq \frac{2\Theta_0}{\varepsilon} \left(1 + \frac{1}{\gamma}\right) \left( \sum_{i=0}^{k-1} M_i^2 \right)^{1/2}\). }
\STATE \textbf{Output:} \(\hat{x} = \arg\min_{i \in I} f(x_i)\).
\end{algorithmic}
\end{algorithm}

For Algorithm \ref{alg:adaptive_mirror1}, we have the following result.

\begin{theorem}\label{theo_alg1_determ}
Let $f: Q \longrightarrow \mathbb{R}$ be a Lipschitz continuous \eqref{Lip_f}, $\gamma$-quasar convex function \eqref{def_quasar}, and $g: Q \longrightarrow \mathbb{R}$ be a convex Lipschitz continuous function \eqref{Lip_g}. Then, for Algorithm \ref{alg:adaptive_mirror1}, $I \ne \emptyset$, it stops after not more than
\[
    k = \left\lceil \frac{4 \max\{M_f^2, M_g^2\} \Theta_0^2 \left(1 + \frac{1}{\gamma}\right)^2}{\varepsilon^2} \right\rceil 
\]
iterations and returns \(\hat{x}\), such that
\begin{equation}\label{eq_results_the1}
    f(\hat{x}) - f(x_*) \leq  \varepsilon, \quad \text{and} \quad  g(\hat{x}) \leq \varepsilon.
\end{equation}
\end{theorem}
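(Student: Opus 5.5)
Summing the one-step mirror-descent estimate \eqref{main_lemma_1} over all $N$ performed iterations gives an inequality; assuming the conclusion fails makes it contradict the stopping rule. The iteration bound then comes from $M_k \le \max\{M_f, M_g\}$.

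\textbf{Step 1: one-step bounds.} Let $N$ be the number of iterations at termination, and set $S_N = \sum_{i=0}^{N-1} M_i^2$. Because $S_k$ is nondecreasing, $h_k \ge h_{N-1} = \Theta_0 S_N^{-1/2}$ for every $k \le N-1$.

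For $k \in I$, Lemma \ref{main_lemma_determ} (inequality \eqref{eq_2lemma}) gives
\[
h_k\bigl(f(x_k) - f(x_*)\bigr) \le \frac{1}{\gamma}\Bigl(\frac{h_k^2}{2}M_k^2 + V(x_*,x_k) - V(x_*,x_{k+1})\Bigr).
\]
For $k \in J$, apply \eqref{main_lemma_1} to $g$ with $y = x_*$. Convexity of $g$, together with $g(x_k) > \varepsilon$ and $g(x_*) \le 0$, yields $\langle \nabla g(x_k), x_k - x_*\rangle \ge g(x_k) - g(x_*) > \varepsilon$. Hence
\[
h_k \varepsilon < \frac{h_k^2}{2}M_k^2 + V(x_*,x_k) - V(x_*,x_{k+1}).
\]

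\textbf{Step 2: the obstacle, and the fix.} The step sizes vary with $k$, so the Bregman terms do not telescope directly. I would divide each inequality by $h_k$ and use the weighted telescoping (Abel summation) argument of adaptive mirror descent. With the bound \eqref{bound_bregman} and the monotonicity $1/h_k \le 1/h_{N-1}$, this gives
\[
\sum_k \frac{V(x_*,x_k) - V(x_*,x_{k+1})}{h_k} \le \frac{\Theta_0^2}{h_{N-1}} = \Theta_0 \sqrt{S_N}.
\]
For the quadratic terms, $\sum_k h_k M_k^2 = \Theta_0 \sum_k M_k^2 / \sqrt{S_{k+1}} \le 2\Theta_0\sqrt{S_N}$ by the standard inequality $\sum_k a_k/\sqrt{\sum_{i \le k} a_i} \le 2\sqrt{\sum_k a_k}$. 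The $1/\gamma$ factor on productive steps is bounded by $1/\gamma$, and on non-productive steps by $1$. Everything together is then bounded by a constant times $(1 + 1/\gamma)\Theta_0\sqrt{S_N}$.

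This bookkeeping is where the factor $2(1 + 1/\gamma)$ in the stopping rule must come out. I expect it to be the delicate step and the main obstacle. If the constant comes out worse, I would adjust by splitting the sum into its $I$ and $J$ parts with separate $\gamma$ weights.

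\textbf{Step 3: contradiction.} Suppose $I = \emptyset$, or that $f(x_k) - f(x_*) > \varepsilon$ for all $k \in I$. Then every summand on the left exceeds $\varepsilon$, so the left side of the summed inequality exceeds $N\varepsilon$. The summed bound then gives $N\varepsilon < 2(1 + 1/\gamma)\Theta_0\sqrt{S_N}$. This contradicts the stopping criterion $N \ge \frac{2\Theta_0}{\varepsilon}(1 + 1/\gamma)\sqrt{S_N}$.

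It follows that $I \neq \emptyset$ and that some $k \in I$ has $f(x_k) - f(x_*) \le \varepsilon$. The output $\hat x$ minimizes $f$ over $I$, so $f(\hat x) - f(x_*) \le \varepsilon$. Since $\hat x$ is a productive point, $g(\hat x) \le \varepsilon$.

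\textbf{Step 4: iteration bound.} Finally, $M_i \le \max\{M_f, M_g\}$ gives $S_N \le N\max\{M_f^2, M_g^2\}$. The stopping rule is therefore guaranteed to hold once $N \ge \frac{2\Theta_0}{\varepsilon}(1 + 1/\gamma)\sqrt{N}\max\{M_f, M_g\}$. Squaring this condition gives the stated bound on $k$.
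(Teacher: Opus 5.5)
Your overall strategy (sum the one-step estimates, then contradict the stopping rule) is sound. The two estimates in Step 2, $\sum_k \frac{h_k}{2}M_k^2\le\Theta_0\sqrt{S_N}$ and the Abel bound $\Theta_0^2/h_{N-1}=\Theta_0\sqrt{S_N}$, are also correct, but only for a sum in which every step carries the same weight $1/h_k$.

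Yours does not. After dividing by $h_k$, the Bregman differences $V(x_*,x_k)-V(x_*,x_{k+1})$ carry the coefficient $1/(\gamma h_k)$ on productive steps and $1/h_k$ on non-productive ones. These differences have no fixed sign, so you cannot bound the factors $1/\gamma$ and $1$ termwise. Abel summation needs a nondecreasing coefficient sequence, but $c_k/h_k$ with $c_k\in\{1,1/\gamma\}$ jumps up by at least $(\frac1\gamma-1)/h_k$ at every return from $J$ to $I$ and can drop at every switch back. The resulting bound therefore deteriorates with the number of switches.

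Your fallback of splitting into separate sums over $I$ and $J$ fails for the same reason. Along the productive indices nothing telescopes, because the iterate after a productive step need not be productive, and $V(x_*,\cdot)$ can grow during the intervening non-productive steps.

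The missing idea is to make the weights uniform \emph{before} telescoping, using the sign of the per-step brackets $b_k:=\frac{h_k}{2}M_k^2+\frac{1}{h_k}\bigl(V(x_*,x_k)-V(x_*,x_{k+1})\bigr)$. On $J$ you have $b_k>\varepsilon>0$, so $b_k\le b_k/\gamma$. With the common weight $1/\gamma$, your two estimates bound the total by $\frac{2}{\gamma}\Theta_0\sqrt{S_N}<2\bigl(1+\frac1\gamma\bigr)\Theta_0\sqrt{S_N}$, and Step 3 then goes through verbatim.

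Equivalently, do what the paper does and keep $\gamma$ on the left. Combine \eqref{main_lemma_1} with $\langle\nabla f(x_k),x_k-x_*\rangle\ge\gamma\bigl(f(x_k)-f(x_*)\bigr)$. This gives $\gamma\bigl(f(x_k)-f(x_*)\bigr)\le b_k$ on $I$ and $\varepsilon<b_k$ on $J$, with $\sum_k b_k\le 2\Theta_0\sqrt{S_N}$.

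With that repair your proof is complete, and it takes a different and cleaner route than the paper's.
\begin{itemize}
\item The paper weakens the telescoped term to $\Theta_0^2/h_k$, which adds an extra $\Theta_0M$. It then derives $f(\hat x)-f(x_*)\le\frac{\varepsilon}{1+\gamma}+\frac{\Theta_0M}{\gamma|I|}$.
\item From there it must lower-bound $|I|$. It does so only heuristically, by substituting the worst-case value of $k$ and taking $\varepsilon$ sufficiently small.
\item Your contradiction with the stopping rule uses the sharper $\Theta_0^2/h_{N-1}$ and never needs to count productive steps. It gives $I\neq\emptyset$ and $f(\hat x)-f(x_*)\le\varepsilon$ for every $\varepsilon>0$. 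The same computation even yields accuracy $\varepsilon/(1+\gamma)$.
\end{itemize}
Your Step 4 coincides with the paper's.
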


\begin{proof}
For each iteration \(i = 0, 1, \ldots, k-1\), define
\[
    \nabla_i := 
    \begin{cases}
    \nabla f(x_i) & \text{if } \, i \in I,\\
    \nabla g(x_i) & \text{if } \, i \in J.
    \end{cases}
\]

At each iteration, we have
\begin{equation}\label{M_i}
    h_k = \Theta_0 \left(\sum_{i = 0}^{k} M_i^2\right)^{-1/2}, \quad M_i = \begin{cases}
    \|\nabla f(x_i)\|_* & \text{if } \, i \in I,\\
    \|\nabla g(x_i)\|_* & \text{if } \, i \in J.
    \end{cases}
\end{equation}
Let \(V_i := V(x_*, x_i)\). Note that \(V_i \ge 0\) by \eqref{eq_breg}, and  \(V_i \le \Theta_0^2\) by \eqref{bound_bregman}.

By Lemma \ref{main_lemma_determ} (inequality \eqref{main_lemma_1}) with \(y = x_*\), at iteration \(i= 0, 1, \ldots, k-1\), and using \eqref{M_i}, we get
\begin{equation}\label{eq_main_all_iters}
     h_i \langle \nabla_i, x_i - x_* \rangle \leq \frac{h_i^2}{2} M_i^2 + V_i - V_{i+1}.
\end{equation}

For all $i \in I$, since $f$ is $\gamma$-quasar convex, we have
\begin{equation}\label{for_productive}
    \langle \nabla f(x_i), x_i - x_* \rangle \geq \gamma \bigl(f(x_i) - f(x_*)\bigr).
\end{equation}

For all \(i \in J\), by the convexity of \(g\) we have
\begin{equation*}
    \langle \nabla g(x_i), x_i - x_* \rangle \ge g(x_i) - g(x_*).
\end{equation*}

Thus, for all \(i = 0, 1, \ldots, k-1\), we have
\begin{equation}\label{hgh12}
    \langle \nabla_i, x_i - x_* \rangle \ge \delta_i,
\end{equation}
where
\[
    \delta_i := 
    \begin{cases}
    \gamma \bigl(f(x_i) - f(x_*)\bigr) & \text{if } \, i \in I,\\
    g(x_i) - g(x_*) & \text{if } \, i \in J.
    \end{cases}
\]

From \eqref{eq_main_all_iters} and \eqref{hgh12}, for all \(i = 0, 1, \ldots, k-1\), we obtain
\[
    h_i \delta_i \le \frac{h_i^2}{2} M_i^2 + V_i - V_{i+1}.
\]
Thus, 
\begin{equation}\label{jhjkh}
    \delta_i \le \frac{h_i}{2} M_i^2 + \frac{1}{h_i} \left(V_i - V_{i+1}\right) \quad \forall \, i = 0, 1, \ldots, k-1.
\end{equation}

Summing up \eqref{jhjkh} for \(i = 0, 1, \dots, k-1\), we get
\begin{equation}\label{lkl1}
    \sum_{i=0}^{k-1} \delta_i \le \sum_{i=0}^{k-1} \frac{h_i}{2} M_i^2 + \sum_{i=0}^{k-1} \frac{1}{h_i} \left(V_i - V_{i+1}\right).
\end{equation}

For the first sum in the right-hand side of \eqref{lkl1}, recall that for all $i = 0, 1, \dots, k-1$, we have \(h_i = \Theta_0 \left( \sum_{j=0}^i M_j^2 \right)^{-1/2}\). Therefore,
\[
    \frac{h_i}{2} M_i^2 = \frac{\Theta_0}{2} \cdot \frac{M_i^2}{\left( \sum_{j=0}^i M_j^2 \right)^{1/2}} \quad \forall \, i = 0,1, \ldots, k-1.
\]

Using the following standard inequality (which can be proved
by induction)
\[
    \sum_{i=0}^{k-1} \frac{M_i^2}{\left( \sum_{j=0}^i M_j^2 \right)^{1/2}} \le 2 \left( \sum_{i=0}^{k-1} M_i^2 \right)^{1/2},
\]
we get the following inequality 
\begin{equation}\label{bound1}
    \sum_{i=0}^{k-1} \frac{h_i}{2} M_i^2 \leq \Theta_0 \left( \sum_{i=0}^{k-1} M_i^2 \right)^{1/2}.
\end{equation}

For the second sum in the right-hand side of \eqref{lkl1}, we know 
\[
    \sum_{i=0}^{k-1} a_i (b_i - b_{i+1}) = a_0 b_0 + \sum_{i=1}^{k-1} (a_i - a_{i-1}) b_i - a_{k-1} b_k.
\]

Let \(a_i := 1/h_i\) and \(b_i := V_i \leq \Theta_0^2\) ($\forall i = 0,1, \ldots, k-1$). 
Then, we find
\begin{align*}
    \sum_{i=0}^{k-1} \frac{1}{h_i} (V_i - V_{i+1}) & \leq \frac{\Theta_0^2}{h_0} + \Theta_0^2 \sum_{i=1}^{k-1} \left(\frac{1}{h_i} - \frac{1}{h_{i-1}}\right) = \frac{\Theta_0^2}{h_{k-1}}.
\end{align*}

From the definition of \(h_i\), the sequence \(\{h_i\}_{i \geq 0}\) is non-increasing, because \(\sum_{j=0}^i M_j^2\) increases with \(i \geq 0\). Hence \(\{1/h_i\}_{i \geq 0}\) is non-decreasing sequence, so \(\frac{1}{h_i} - \frac{1}{h_{i-1}} \ge 0\) for all \(i \ge 1\). Therefore, 
\[
    \sum_{i=0}^{k-1} \frac{1}{h_i} (V_i - V_{i+1}) \le \frac{\Theta_0^2}{h_k}.
\]

Recall, \(\frac{1}{h_k} = \frac{1}{\Theta_0} \left( \sum_{i=0}^k M_i^2 \right)^{1/2}\). Since \(\left( \sum_{i=0}^k M_i^2 \right)^{1/2} \le \left( \sum_{i=0}^{k-1} M_i^2 \right)^{1/2} + M_k\), we obtain
\[
    \frac{\Theta_0^2}{h_k} \le \Theta_0 \left( \sum_{i=0}^{k-1} M_i^2 \right)^{1/2} + \Theta_0 M_k.
\]

Let \(M := \max\{M_f, M_g\}\), then \(M_k \le M\). Thus,
\begin{equation}\label{bound2}
    \sum_{i=0}^{k-1} \frac{1}{h_i} (V_i - V_{i+1}) \leq \Theta_0 \left( \sum_{i=0}^{k-1} M_i^2 \right)^{1/2} + \Theta_0 M.
\end{equation}

From \eqref{lkl1}, \eqref{bound1}, and \eqref{bound2} we get the following inequality
\begin{equation}\label{hjlk12}
    \sum_{i=0}^{k-1} \delta_i \le 2\Theta_0 \left( \sum_{i=0}^{k-1} M_i^2 \right)^{1/2} + \Theta_0 M.
\end{equation}

From the stopping criterion, we have
\[
    \left( \sum_{i=0}^{k-1} M_i^2 \right)^{1/2} \le \frac{\varepsilon k}{2\Theta_0 (1 + 1/\gamma)}.
\]

Thus, \eqref{hjlk12} will be as follows
\begin{equation}\label{dsfd0123}
    \sum_{i=0}^{k-1} \delta_i \leq \frac{\varepsilon k}{1 + 1/\gamma} + \Theta_0 M.
\end{equation}

For \(i \in J\), we have \(g(x_i) > \varepsilon\) and \(g(x_*) \le 0\), so
\[
    \delta_i = g(x_i) - g(x_*) \ge g(x_i) > \varepsilon.
\]

We have \(|I| + |J| = k\). So, we get the following
\begin{align}\label{545ss}
    \sum_{i=0}^{k-1} \delta_i &= \sum_{i \in I} \gamma \bigl(f(x_i)-f(x_*)\bigr) + \sum_{i \in J} (g(x_i)-g(x_*)) \nonumber
    \\& \ge \gamma \sum_{i \in I} (f(x_i)-f(x_*)) + |J| \varepsilon. 
\end{align}

Hence, from \eqref{dsfd0123} and \eqref{545ss}, we find 
\[
    \gamma \sum_{i \in I} (f(x_i)-f(x_*)) \leq \frac{\varepsilon (\gamma |I| - |J|)}{ 1 + \gamma } + \Theta_0 M.
\]

Since \(|J| \ge 0\), we have \(\gamma |I| - |J| \le \gamma |I|\). Thus, 
\[
    \sum_{i \in I} (f(x_i)-f(x_*)) \le \frac{\varepsilon |I|}{1 + \gamma } + \frac{\Theta_0 M}{\gamma}.
\]

Since \(\hat{x} = \arg\min_{i \in I} f(x_i)\), then \(f(\hat{x}) \le f(x_i)\) for all \(i \in I\) (in the next, we will prove that $I \ne \emptyset$). So, we get 
\[
    |I| \bigl(f(\hat{x}) - f(x_*)\bigr) \le \sum_{i \in I} (f(x_i)-f(x_*)) \le \frac{\varepsilon |I|}{1 + \gamma} + \frac{\Theta_0 M}{\gamma}.
\]
Thus,
\[
    f(\hat{x}) - f(x_*) \leq \frac{\varepsilon}{\gamma + 1} + \frac{\Theta_0 M}{\gamma |I|}.
\]

Our goal is to guarantee \( f(\hat{x}) - f(x_*) \leq \varepsilon \). This will hold if
\begin{equation}\label{tag33}
    \frac{\varepsilon}{\gamma + 1} + \frac{\Theta_0 M}{\gamma |I|} \leq \varepsilon \quad \implies \quad |I| \geq \frac{\Theta_0 M (\gamma + 1)}{\gamma^2 \varepsilon}.  
\end{equation}
So, if the number of productive steps \( |I| \) satisfies \eqref{tag33}, then  \( f(\hat{x}) - f(x_*) \leq \varepsilon \).

Since $f(x_i) - f(x_*) \geq 0$ for all $i = 0, 1, \ldots, k - 1$, from \eqref{dsfd0123}, and \eqref{545ss} we have
$$
    |J| \leq \frac{k \gamma}{1+ \gamma} + \frac{\Theta_0 M}{\varepsilon}.
$$
So, since $|I| + |J| = k$, we get
\begin{equation}\label{tag38}
    |I| \geq \frac{k}{\gamma + 1} - \frac{\Theta_0 M}{\varepsilon}.
\end{equation}

In the worst case for \( |I| \) (i.e., the smallest \( |I| \)), from the stopping criterion with \(M_i \le \max\{M_f, M_g\} =: M\) for all \(i = 0, 1, \ldots, k-1\), with equality (up to rounding), we have
\[
    k \approx \frac{2\Theta_0}{\varepsilon} \left( 1 + \frac{1}{\gamma} \right) M\sqrt{k} \quad \implies \quad k \approx  \frac{4\Theta_0^2 M^2(1+ \gamma)^2}{\varepsilon^2 \gamma^2}.
\]

By substituting this approximation in \eqref{tag38}, we find
\begin{equation}\label{tag41}
    |I| \geq  \frac{4\Theta_0^2 M^2 (1+ \gamma)}{\varepsilon^2 \gamma^2} - \frac{\Theta_0 M}{\varepsilon}.
\end{equation}

Let $A: = \frac{\Theta_0 M (\gamma + 1)}{\gamma^2 \varepsilon}.$ So, \eqref{tag33} is simply \( |I| \geq A \). From the right-hand side of \eqref{tag41}, we can write
\begin{align}\label{tag43}
    B & :=  \frac{4\Theta_0^2 M^2 (\gamma+1)}{\varepsilon^2 \gamma^2} - \frac{\Theta_0 M}{\varepsilon} = A \left( \frac{4\Theta_0 M}{\varepsilon} - \frac{\gamma^2}{\gamma+1} \right).
\end{align}
Since \( \gamma \in (0,1] \), we have \( \frac{\gamma^2}{\gamma+1} \leq \frac{1}{2} \). Thus, 
\[
    \frac{B}{A} = \frac{4\Theta_0 M}{\varepsilon} - \frac{\gamma^2}{\gamma+1} \geq \frac{4\Theta_0 M}{\varepsilon} - \frac{1}{2}.
\]

For sufficiently small \( \varepsilon > 0 \), the term \( \frac{4\Theta_0 M}{\varepsilon} \) dominates. So, $B \gg A$.

Hence, for sufficiently small \( \varepsilon > 0 \), the bound for $|I|$ in \eqref{tag41} (which is concluded from the stopping criterion of Algorithm \ref{alg:adaptive_mirror1}) is much stronger (i.e., larger) than the bound in \eqref{tag33} (which will we need to guarantee an $\varepsilon$-solution with respect to $f$). Consequently, any algorithm satisfying  \eqref{tag41} automatically satisfies \eqref{tag33}, and thus guarantees \( f(\hat{x}) - f(x_*) \leq \varepsilon \), which is the first desired inequality in \eqref{eq_results_the1}.

By the definition of the output $\hat{x}$, it is clear that $g(\hat{x}) \leq \varepsilon$, which is the second desired inequality in \eqref{eq_results_the1}.

Moreover, for sufficiently small \( \varepsilon \), from \eqref{tag41} (recall that this inequality is concluded from the stopping criterion of the algorithm), we find 
\begin{equation*}
    |I| \geq  \frac{4\Theta_0^2 M^2 (\gamma+1)}{\varepsilon^2 \gamma^2} - \frac{\Theta_0 M}{\varepsilon} > 0.
\end{equation*}
So, for sufficiently small \( \varepsilon \), we can conclude that \( |I| \geq 1 \) (i.e., there is at least one productive step).

To estimate the upper bound for the number of iterations, we have $M: = \max\{M_f, M_g\} \ge M_i$ for all \(i = 0, 1, \ldots. k-1\), and $\sum_{i=0}^{k-1} M_i^2 \le k M^2.$ The stopping criterion is satisfied when
\[
    k \ge \frac{2\Theta_0}{\varepsilon} \left(1 + \frac{1}{\gamma}\right) \sqrt{k M^2} = \frac{2\Theta_0 M}{\varepsilon} \left(1 + \frac{1}{\gamma}\right) \sqrt{k}.
\]
Thus, 
\[
    k \ge \frac{4\Theta_0^2 M^2}{\varepsilon^2} \left(1 + \frac{1}{\gamma}\right)^2.
\]

Therefore, the algorithm stops after not more than
\[
    k= \left\lceil \frac{4 \max\{M_f^2, M_g^2\} \Theta_0^2 \left(1 + \frac{1}{\gamma}\right)^2}{\varepsilon^2}  \right\rceil
\]
iterations. Hence, Theorem \ref{theo_alg1_determ} is proved. 
\end{proof}

\begin{remark}
We found that to achieve \( f(\hat{x}) - f(x_*) \leq \varepsilon \), the number of productive steps \( |I| \) must satisfy \eqref{tag33}, that is $|I| \geq \frac{\Theta_0 M (\gamma + 1)}{\gamma^2 \varepsilon} \geq \frac{\Theta_0 M (\gamma + 1)}{\varepsilon}.$ Therefore, by taking $\varepsilon \geq \frac{\Theta_0 M (\gamma + 1)}{|I|}$ as a stopping criterion for Algorithm \ref{alg:adaptive_mirror1} (which is non-adaptive), we get an $\varepsilon$-solution of the problem under consideration, and the algorithm will perform $\mathcal{O}(1/\varepsilon)$ productive steps. 
\end{remark}

\subsection{Modification of Standard Adaptive Deterministic Mirror Descent Algorithm for Problems with Many Functional Constraints.}\label{subsec_modif_groupI}

In this subsection, we consider problem \eqref{general_problem_1} with a large number of functional constraints. To handle this case, we modify Algorithm \ref{alg:adaptive_mirror1} in a manner analogous to that proposed in \cite{Stonyakin2018Adaptive}. Specifically, when a non-productive step occurs, i.e., when $g(x_k) > \varepsilon$, we do not compute a (sub)gradient of the max-type functional constraint $g(x) = \max_{1 \leq i \leq m} \{ g_i(x) \}$. Instead, we compute a (sub)gradient of a single functional constraint $g_i$ for which $g_i(x_k) > \varepsilon$. This modification is designed to reduce the algorithm's running time by avoiding the evaluation of all functional constraints on non-productive steps, which is particularly beneficial when the number of constraints $m$ is large. The resulting modified algorithm is presented as Algorithm \ref{alg:adaptive_mirror1_modific}.

\begin{algorithm}
\caption{Modification of Adaptive Deterministic Mirror Descent Algorithm for Problems with Many Functional Constraints.}
\label{alg:adaptive_mirror1_modific}
\begin{algorithmic}[1]
\REQUIRE accuracy \(\varepsilon > 0\), starting point $x_0$, prox-function $d(\cdot)$, \(\Theta_0\) s.t. \eqref{bound_bregman} holds.
\STATE Initialize the set \(I\) as empty set.
\STATE Set \(k = 0\).
\REPEAT
\IF{\(g_i(x_k) \leq \varepsilon  \, \forall i = 1, \ldots, m,  \)}
    \STATE \(M_k = \|\nabla f(x_k)\|_{*}\),
    \STATE \(h_k = \Theta_0 \left( \sum_{i=0}^k M_i^2 \right)^{-1/2}\),
    \STATE \(x_{k+1} = \operatorname{Mirr}_{x_k} \left( h_k \nabla f(x_k) \right)\), \quad “productive step”
    \STATE Add \(k\) to \(I\).
    \ELSE 
    \STATE (i.e., $ \exists q = q(k) \in \{1, \ldots, m\}, \text{s.t. } g_{q(k)}(x_k) > \varepsilon$)
    \STATE \(M_k = \|\nabla g_{q(k)}(x_k)\|_{*}\),
    \STATE \(h_k = \Theta_0 \left( \sum_{i=0}^k M_i^2 \right)^{-1/2}\),
    \STATE \(x_{k+1} = \operatorname{Mirr}_{x_k} \left( h_k \nabla g_{q(k)}(x_k) \right)\). \quad “non-productive step”
\ENDIF
\STATE Set \(k = k + 1\).
\UNTIL{\(k \geq \frac{2\Theta_0}{\varepsilon} \left(1 + \frac{1}{\gamma}\right) \left( \sum_{i=0}^{k-1} M_i^2 \right)^{1/2}\).}
\STATE \textbf{Output:} \(\hat{x} = \arg\min_{i \in I} f(x_i)\).
\end{algorithmic}
\end{algorithm}

For Algorithm \ref{alg:adaptive_mirror1_modific}, we have the following result.

\begin{theorem}\label{theo_modifalg1_determ}
Let $f: Q \longrightarrow \mathbb{R}$ be a Lipschitz continuous \eqref{Lip_f}, $\gamma$-quasar convex function \eqref{def_quasar}, and $g_i: Q \longrightarrow \mathbb{R}$ be  convex Lipschitz continuous functions with constants $M_{g_i} > 0$ \eqref{Lip_g}. Assume that for all $x \in Q$, we have $\|\nabla f(x)\|_* \leq M_f$, and $\|\nabla g_i(x)\|_* \leq M_g \, \forall i =1, \ldots, m$, where $M_g = \max_{1 \leq i \leq m} \{M_{g_i}\}$. Then, for Algorithm \ref{alg:adaptive_mirror1}, $I \ne \emptyset$, it stops after not more than
\[
    k = \left\lceil \frac{4 \max\{M_f^2, M_g^2\} \Theta_0^2 \left(1 + \frac{1}{\gamma}\right)^2}{\varepsilon^2} \right\rceil 
\]
iterations and returns \(\hat{x}\), such that
\begin{equation}\label{eps_sol_modifi_alg1}
    f(\hat{x}) - f(x_*) \leq  \varepsilon, \quad \text{and} \quad  g(\hat{x}) \leq \varepsilon.
\end{equation}
\end{theorem}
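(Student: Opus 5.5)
The plan is to run the proof of Theorem \ref{theo_alg1_determ} again almost unchanged. The only difference is what happens on non-productive steps: there we use the subgradient of a single violated constraint $g_{q(i)}$ instead of the subgradient of the max-function $g$. Throughout, the statement should be read as referring to Algorithm \ref{alg:adaptive_mirror1_modific}.

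First, for each $i=0,\ldots,k-1$ we set $\nabla_i=\nabla f(x_i)$ if $i\in I$ and $\nabla_i=\nabla g_{q(i)}(x_i)$ if $i\in J$. We set $M_i=\|\nabla_i\|_*$, so $M_i\le M:=\max\{M_f,M_g\}$ by the stated bounds on $\|\nabla g_i(x)\|_*$. Applying Lemma \ref{main_lemma_determ}, inequality \eqref{main_lemma_1} with $y=x_*$, gives \eqref{eq_main_all_iters} verbatim, since that lemma uses only the mirror step and not the nature of the subgradient.

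The key lower bound \eqref{hgh12} on non-productive steps is where the modification enters. By convexity of $g_{q(i)}$ we have $\langle\nabla g_{q(i)}(x_i),x_i-x_*\rangle\ge g_{q(i)}(x_i)-g_{q(i)}(x_*)$. Feasibility of $x_*$ gives $g_{q(i)}(x_*)\le g(x_*)\le 0$, and the choice of $q(i)$ gives $g_{q(i)}(x_i)>\varepsilon$. So $\delta_i:=g_{q(i)}(x_i)-g_{q(i)}(x_*)>\varepsilon$, exactly as before. On productive steps, \eqref{for_productive} is unchanged.

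From here the argument is identical to the earlier proof:
\begin{enumerate}
\item sum the inequalities \eqref{jhjkh};
\item bound the step-size sum by \eqref{bound1} and the telescoping Bregman sum by \eqref{bound2}, the latter using $V_i\le\Theta_0^2$ and the monotonicity of $h_i$;
\item use the stopping rule, which has the same form, to get \eqref{dsfd0123};
\item split over $I$ and $J$ to obtain the bound on $f(\hat x)-f(x_*)$ in terms of $|I|$, together with the lower bound \eqref{tag38} on $|I|$;
\item conclude, for small $\varepsilon$, that $|I|\ge1$ and that \eqref{tag33} holds, giving $f(\hat x)-f(x_*)\le\varepsilon$.
\end{enumerate}

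The constraint part is immediate. A step is productive only if $g_i(x_k)\le\varepsilon$ for all $i$, so $g(\hat x)=\max_i g_i(\hat x)\le\varepsilon$. The iteration bound follows from $\sum_{i<k} M_i^2\le kM^2$ inserted in the stopping criterion, exactly as in the earlier proof.

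The main point needing care is verifying that nothing in the original argument used the specific subgradient of $g$: the Lipschitz bound $M_i\le M$ and the inequality $\delta_i>\varepsilon$ must hold for the single-constraint subgradient. Both do hold, as shown above. Beyond that, I would inherit the same "sufficiently small $\varepsilon$" caveat used to guarantee $I\neq\emptyset$ in Theorem \ref{theo_alg1_determ}.
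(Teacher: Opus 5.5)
Your reduction is the same as the paper's. The paper also redoes only the per-step inequality with $\nabla g_{q(i)}(x_i)$, uses $g_{q(i)}(x_i)>\varepsilon$ and $g_{q(i)}(x_*)\le 0$ to get $\delta_i>\varepsilon$, and then defers the rest to Theorem~\ref{theo_alg1_determ}.

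\textbf{The gap.} The part you inherit (your steps 4--5) is not a valid argument, and the caveat you flag is only a symptom of it. The right-hand side of \eqref{tag38} increases with $k$. Deriving \eqref{tag33} from it therefore needs a \emph{lower} bound on the stopping index. The stopping rule only gives an \emph{upper} bound: the value $k\approx 4\Theta_0^2M^2(1+\gamma)^2/(\gamma^2\varepsilon^2)$ substituted in the earlier proof is the latest possible stop, not the earliest.

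When the observed $M_i$ are small compared with $M$, the method stops much sooner. For example, if step $0$ is productive with $M_0\le \varepsilon/(2\Theta_0(1+1/\gamma))$, it stops at $k=1$ with $|I|=1$, for every $\varepsilon$. Then \eqref{tag38} is vacuous and the term $\Theta_0M/(\gamma|I|)$ is uncontrolled. Restricting to ``sufficiently small $\varepsilon$'', which is not a hypothesis of the theorem, does not help.

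\textbf{The fix.} The $\Theta_0 M$ term in \eqref{bound2} is an artifact of passing from $h_{k-1}$ to $h_k$. Abel summation with $1/h_i$ nondecreasing and $0\le V_i\le\Theta_0^2$ already gives
\[
\sum_{i=0}^{k-1}\frac{1}{h_i}\,(V_i-V_{i+1})\le\frac{\Theta_0^2}{h_{k-1}}=\Theta_0\Bigl(\sum_{i=0}^{k-1}M_i^2\Bigr)^{1/2}.
\]
Combined with \eqref{bound1} and the stopping rule, this yields $\sum_{i<k}\delta_i\le 2\Theta_0(\sum_{i<k}M_i^2)^{1/2}\le \gamma\varepsilon k/(1+\gamma)$.
\begin{itemize}
\item If $I=\emptyset$, the left side exceeds $k\varepsilon$ (note $k\ge 1$), which is a contradiction.
\item Otherwise, using $\delta_i>\varepsilon$ on $J$, you get $\gamma\sum_{i\in I}(f(x_i)-f(x_*))\le\frac{\gamma\varepsilon|I|}{1+\gamma}-\frac{\varepsilon|J|}{1+\gamma}$. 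Hence $f(\hat x)-f(x_*)\le\varepsilon/(1+\gamma)\le\varepsilon$ for every $\varepsilon>0$, with no lower bound on $|I|$ needed.
\end{itemize}

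With this sharper form of your step 2, steps 4--5 are replaced by the two cases above. The rest of your outline goes through unchanged: the per-step inequalities for $g_{q(i)}$, the constraint argument $g(\hat x)=\max_i g_i(\hat x)\le\varepsilon$, and the iteration bound.
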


\begin{proof}
For each iteration \(i = 0, 1, \ldots, k-1\), define
\[
    \nabla_i := 
    \begin{cases}
    \nabla f(x_i) & \text{if } \, i \in I,\\
    \nabla g_{q(i)}(x_i) & \text{if } \, i \in J.
    \end{cases}
\]

At each iteration, we have
\begin{equation}\label{M_i_mod}
    h_k = \Theta_0 \left(\sum_{i = 0}^{k} M_i^2\right)^{-1/2}, \quad M_i = \begin{cases}
    \|\nabla f(x_i)\|_* & \text{if } \, i \in I,\\
    \|\nabla g_{q(i)}(x_i)\|_* & \text{if } \, i \in J.
    \end{cases}
\end{equation}
Let \(V_i := V(x_*, x_i)\). By Lemma \ref{main_lemma_determ} (inequality \eqref{main_lemma_1}) with \(y = x_*\) at iteration \(i= 0, 1, \ldots, k-1\), and using \eqref{M_i_mod}, we get
\begin{equation}\label{eq_main_all_iters_mod}
     h_i \langle \nabla_i, x_i - x_* \rangle \leq \frac{h_i^2}{2} M_i^2 + V_i - V_{i+1}.
\end{equation}

For all $i \in I$, since $f$ is $\gamma$-quasar convex, we have
\begin{equation*}
    \langle \nabla f(x_i), x_i - x_* \rangle \geq \gamma \bigl(f(x_i) - f(x_*)\bigr).
\end{equation*}

For all \(i \in J\), by the convexity of \(g_{q(i)}\), we have
\begin{equation*}
    \langle \nabla g_{q(i)}(x_i), x_i - x_* \rangle \geq g_{q(i)}(x_i) - g_{q(i)}(x_*).
\end{equation*}

Thus, for all \(i = 0, 1, \ldots, k-1\), we have
\begin{equation}\label{hgh12_mod}
    \langle \nabla_i, x_i - x_* \rangle \ge \delta_i,
\end{equation}
where
\[
    \delta_i := 
    \begin{cases}
    \gamma \bigl(f(x_i) - f(x_*)\bigr) & \text{if } \, i \in I,\\
    g_{q(i)}(x_i) - g_{q(i)}(x_*) & \text{if } \, i \in J.
    \end{cases}
\]

From \eqref{eq_main_all_iters_mod} and \eqref{hgh12_mod}, for all \(i = 0, 1, \ldots, k-1\), we obtain
\begin{equation}\label{jhjkh_mod}
    \delta_i \le \frac{h_i}{2} M_i^2 + \frac{1}{h_i} \left(V_i - V_{i+1}\right) \quad \forall \, i = 0, 1, \ldots, k-1.
\end{equation}

Summing up \eqref{jhjkh_mod} for \(i = 0, 1, \dots, k-1\), we get
\begin{equation}\label{lkl1_mod}
    \sum_{i=0}^{k-1} \delta_i \le \sum_{i=0}^{k-1} \frac{h_i}{2} M_i^2 + \sum_{i=0}^{k-1} \frac{1}{h_i} \left(V_i - V_{i+1}\right).
\end{equation}

For the first sum in the right-hand side of \eqref{lkl1_mod}, we have (see \eqref{bound1}) 
\begin{equation}\label{bound1_mod}
    \sum_{i=0}^{k-1} \frac{h_i}{2} M_i^2 \leq \Theta_0 \left( \sum_{i=0}^{k-1} M_i^2 \right)^{1/2}.
\end{equation}

For the second sum in the right-hand side of \eqref{lkl1_mod}, we have (see \eqref{bound2}) 
\begin{equation}\label{bound2_mod}
    \sum_{i=0}^{k-1} \frac{1}{h_i} (V_i - V_{i+1}) \leq \Theta_0 \left( \sum_{i=0}^{k-1} M_i^2 \right)^{1/2} + \Theta_0 M,
\end{equation}
where \(M := \max\{M_f, M_g\}\).

From \eqref{lkl1_mod}, \eqref{bound1_mod}, \eqref{bound2_mod}, and from stopping criterion, we get the following inequality
\begin{equation}\label{dsfd0123_mod}
    \sum_{i=0}^{k-1} \delta_i \leq \frac{\varepsilon k}{1 + 1/\gamma} + \Theta_0 M.
\end{equation}

Recall that for \(i \in J\), we have \(g_{q(i)}(x_i) > \varepsilon\) and \(g_{q(i)}(x_*) \le 0\), so
\[
    \delta_i = g_{q(i)}(x_i) - g_{q(i)}(x_*) \ge g_{q(i)}(x_i) > \varepsilon.
\]

We have \(|I| + |J| = k\). So, we get the following
\begin{align}\label{545ss_mod}
    \sum_{i=0}^{k-1} \delta_i & = \sum_{i \in I} \gamma \bigl(f(x_i)-f(x_*)\bigr) + \sum_{i \in J} (g_{q(i)}(x_i)-g_{q(i)}(x_*)) \nonumber
    \\& \ge \gamma \sum_{i \in I} (f(x_i)-f(x_*)) + |J| \varepsilon. 
\end{align}

Hence, from \eqref{dsfd0123_mod} and \eqref{545ss_mod}, we find 
\[
    \gamma \sum_{i \in I} (f(x_i)-f(x_*)) \leq \frac{\varepsilon (\gamma |I| - |J|)}{ 1 + \gamma } + \Theta_0 M.
\]

Thus, 
\[
    \sum_{i \in I} (f(x_i)-f(x_*)) \le \frac{\varepsilon |I|}{1 + \gamma } + \frac{\Theta_0 M}{\gamma}.
\]

From the definition of \(\hat{x} = \arg\min_{i \in I} f(x_i)\), we find
\[
    |I| \bigl(f(\hat{x}) - f(x_*)\bigr) \le \sum_{i \in I} (f(x_i)-f(x_*)) \le \frac{\varepsilon |I|}{1 + \gamma} + \frac{\Theta_0 M}{\gamma}.
\]
Therefore, 
\[
    f(\hat{x}) - f(x_*) \leq \frac{\varepsilon}{\gamma + 1} + \frac{\Theta_0 M}{\gamma |I|}.
\]

Now, the rest of the proof, to conclude \eqref{eps_sol_modifi_alg1}, is the same as the proof of Theorem \ref{theo_alg1_determ}.
\end{proof}

\subsection{Standard Adaptive Stochastic Mirror Descent Algorithm for Problems with One Functional Constraint.}\label{subsec_stoc_stand_GI}

In this subsection, we address the stochastic formulation of problem \eqref{general_problem} and propose a stochastic version of Algorithm \ref{alg:adaptive_mirror1}, presented as Algorithm \ref{alg:stocadaptive_mirror1}. Although this algorithm was originally introduced in \cite{Viscarra2026Convergence}, the convergence analysis provided herein is entirely different and novel.

For the stochastic setting of \eqref{general_problem}, we introduce the following standard assumptions (see \cite{Bayandina2018Mirror,Bayandina2017Stochastic}). For any given point $x \in Q$, we assume access to stochastic (sub)gradients $\nabla f(x,\xi)$ and $\nabla g(x,\zeta)$, where $\xi$ and $\zeta$ are random vectors. These stochastic (sub)gradients satisfy

\begin{equation}\label{bound_stoc_f_g_1}
    \mathbb{E}[\nabla f(x,\xi)] = \nabla f(x) \in \partial f(x), \quad \text{and} \quad \mathbb{E}[\nabla g(x,\zeta)] = \nabla g(x) \in \partial g(x),
\end{equation}
where $\mathbb{E}$ denotes the expectation, and
\begin{equation}\label{bound_stoc_f_g_2} 
    \|\nabla f(x,\xi)\|_* \leq M_f, \quad \text{and} \quad  \|\nabla g(x,\zeta)\|_* \leq M_g, \quad a.s. \;\, \text{in} \; \xi, \zeta.
\end{equation}

Let $\varepsilon > 0$ be a given positive real number. A (random) point $\hat{x} \in Q$ is said to be an expected $\varepsilon$-solution to problem \eqref{general_problem} in the stochastic setting if
\begin{equation}\label{expected_sol}
    \mathbb{E}[f(\hat{x})] - f(x^*) \leq \varepsilon, \quad \text{and} \quad g(\hat{x}) \leq \varepsilon. 
\end{equation}

We next present a stochastic version of Lemma \ref{main_lemma_determ}, which plays a key role in the analysis of the proposed algorithms (in the stochastic setting).

\begin{lemma}\label{main_lemma_stoch}
Let \(f : Q \longrightarrow \mathbb{R}\), \(h > 0\), \(x \in Q^{\circ}\), and \(z = \operatorname{Mirr}_x(h(\nabla f(x) + \Delta))\). Then, for any \(y \in Q\), it holds
\begin{equation}\label{main_lemma_stoc}
    h\langle \nabla f(x) + \Delta, x - y \rangle \leq \frac{h^2}{2} \|\nabla f(x) + \Delta\|_{*}^2 + V(y, x) - V(y, z).
\end{equation}

If \(y = x_*\), and \(f\) is \(\gamma\)-quasar convex, then it holds the following
\begin{align}
    & \quad \;\, h(f(x) - f(x_*)) + \frac{h}{\gamma} \langle \Delta, x - x_* \rangle  \nonumber
    \\& \leq \frac{h}{\gamma} \langle \nabla f(x) + \Delta, x - x_* \rangle   \label{eq_1lemma_stoch}
    \\& \leq \frac{1}{\gamma} \left( \frac{h^2}{2} \|\nabla f(x) + \Delta\|_{*}^2 + V(x_*, x) - V(x_*, z) \right). \label{eq_2lemma_stoch}
\end{align}
\end{lemma}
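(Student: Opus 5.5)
The plan is to treat Lemma \ref{main_lemma_stoch} as a direct corollary of the deterministic mirror-step inequality. The idea is to apply that inequality with the vector $p := \nabla f(x) + \Delta \in \mathbf{E}^*$ in place of $\nabla f(x)$. The first inequality \eqref{main_lemma_stoc} is purely a property of one prox-step $z = \operatorname{Mirr}_x(hp)$ for an arbitrary dual vector $p$. Indeed, the proof of \eqref{main_lemma_1} never uses that $p$ is a subgradient of $f$. One writes the optimality condition of the strongly convex problem defining $z$,
\[
\langle hp + \nabla d(z) - \nabla d(x), y - z\rangle \ge 0 \quad \forall y\in Q,
\]
and combines it with the three-point identity
\[
\langle \nabla d(z) - \nabla d(x), y - z\rangle = V(y,x) - V(y,z) - V(z,x).
\]
This gives $h\langle p, z-y\rangle \le V(y,x)-V(y,z)-V(z,x)$. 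Adding $h\langle p, x-z\rangle \le h\|p\|_*\|x-z\| \le \frac{h^2}{2}\|p\|_*^2 + \frac12\|x-z\|^2$ and using \eqref{eq_breg} to absorb $\frac12\|x-z\|^2 \le V(z,x)$ yields \eqref{main_lemma_stoc}. So the first step is simply to invoke \eqref{main_lemma_1} (equivalently, its proof from \cite{Bayandina2018Mirror}) with $\nabla f(x)$ replaced by $p$.

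For the second part, I set $y = x_*$ and use quasar convexity \eqref{def_quasar} at $x$. This gives $\frac{1}{\gamma}\langle \nabla f(x), x - x_*\rangle \ge f(x) - f(x_*)$. Multiplying by $h>0$ and adding $\frac{h}{\gamma}\langle \Delta, x - x_*\rangle$ to both sides gives exactly \eqref{eq_1lemma_stoch}.

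Then \eqref{eq_2lemma_stoch} follows by dividing \eqref{main_lemma_stoc} (with $y=x_*$) by $\gamma>0$. No sign issue arises, since the right-hand side is only rescaled by the positive factor $1/\gamma$ and the inequality direction is preserved.

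The only point needing care, which is hardly an obstacle, is confirming that the deterministic lemma is genuinely valid for an arbitrary dual vector rather than a subgradient. That is why I would briefly recall the optimality-condition argument above instead of citing \eqref{main_lemma_1} verbatim. Everything else is bookkeeping.
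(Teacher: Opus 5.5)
Your proposal is correct and follows the paper's argument. The paper simply cites \cite{Bayandina2018Mirror} for \eqref{main_lemma_stoc}, while you re-derive it for an arbitrary dual vector $p$ via the optimality condition, the three-point identity, Young's inequality and \eqref{eq_breg}; otherwise \eqref{eq_1lemma_stoch} is quasar convexity scaled by $h$ with $\frac{h}{\gamma}\langle \Delta, x-x_*\rangle$ added to both sides, and \eqref{eq_2lemma_stoch} is \eqref{main_lemma_stoc} at $y=x_*$ scaled by $1/\gamma$, exactly as in the paper (and you are right that only $\gamma>0$, not $\gamma\le 1$, is needed there).
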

\begin{proof}
The proof of \eqref{main_lemma_stoc} can be found, for instance, in \cite{Bayandina2018Mirror}. Inequality \eqref{eq_1lemma_stoch} follows from the quasar convexity of the function $f$, see \eqref{def_quasar}, and \eqref{main_lemma_stoc}. Inequality \eqref{eq_2lemma_stoch} is a direct consequence of \eqref{main_lemma_stoc} with $y = x_*$, together with the fact that $\gamma \in (0, 1]$. 
\end{proof}

\begin{algorithm}
\caption{Standard Adaptive Stochastic Mirror Descent Algorithm for Problems with One Functional Constraint.}
\label{alg:stocadaptive_mirror1}
\begin{algorithmic}[1]
\REQUIRE accuracy \(\varepsilon > 0\), starting point $x_0$, prox-function $d(\cdot)$, \(\Theta_0\) s.t. \eqref{bound_bregman} holds.
\STATE Initialize the set \(I\) as empty set.
\STATE Set \(k = 0\).
\REPEAT
\IF{\(g(x_k) \leq \varepsilon\)}
    \STATE \(M_k = \|\nabla f(x_k, \xi_k)\|_{*}\),
    \STATE \(h_k = \Theta_0 \left( \sum_{i=0}^k M_i^2 \right)^{-1/2}\),
    \STATE \(x_{k+1} = \operatorname{Mirr}_{x_k} \left( h_k \nabla f(x_k, \xi_k) \right)\), \quad “productive step”
    \STATE Add \(k\) to \(I\).
    \ELSE
    \STATE \(M_k = \|\nabla g(x_k, \zeta_k)\|_{*}\),
    \STATE \(h_k = \Theta_0 \left( \sum_{i=0}^k M_i^2 \right)^{-1/2}\),
    \STATE \(x_{k+1} = \operatorname{Mirr}_{x_k} \left( h_k \nabla g(x_k, \zeta_k) \right)\). \quad “non-productive step”
\ENDIF
\STATE Set \(k = k + 1\).
\UNTIL{\(k \geq \frac{2\Theta_0}{\varepsilon} \left(1 + \frac{1}{\gamma}\right) \left( \sum_{i=0}^{k-1} M_i^2 \right)^{1/2}\).}
\STATE \textbf{Output:} \(\hat{x} = \arg\min_{i \in I} f(x_i)\).
\end{algorithmic}
\end{algorithm}

For Algorithm \ref{alg:stocadaptive_mirror1}, we have the following result.
 
\begin{theorem}\label{theorem_stoc_adaptive}
Let $f: Q \longrightarrow \mathbb{R}$ be a $\gamma$-quasar convex function \eqref{def_quasar}, and $g: Q \longrightarrow \mathbb{R}$ be a convex Lipschitz continuous function. Assume that \eqref{bound_stoc_f_g_1}, and \eqref{bound_stoc_f_g_2} hold. Then, for Algorithm \ref{alg:stocadaptive_mirror1}, $I \ne \emptyset$, it stops after not more than
\[
    k = \left\lceil \frac{4 \max\{M_f^2, M_g^2\} \Theta_0^2 \left(1 + \frac{1}{\gamma}\right)^2}{\varepsilon^2} \right\rceil 
\]
iterations and returns \(\hat{x}\), such that
\begin{equation}\label{eq_results_theo2}
    \mathbb{E}[f(\hat{x})] - f(x_*) \leq  \varepsilon, \quad \text{and} \quad g(\hat{x}) \leq \varepsilon.
\end{equation}
\end{theorem}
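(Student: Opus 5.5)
The proof will follow the deterministic argument of Theorem \ref{theo_alg1_determ}, using Lemma \ref{main_lemma_stoch} in place of Lemma \ref{main_lemma_determ}. For $i\in I$ write $\nabla_i:=\nabla f(x_i,\xi_i)=\nabla f(x_i)+\Delta_i$, and for $i\in J$ write $\nabla_i:=\nabla g(x_i,\zeta_i)=\nabla g(x_i)+\Delta_i$. Here $\Delta_i$ is the stochastic noise, and by \eqref{bound_stoc_f_g_1} it satisfies $\mathbb{E}[\Delta_i\mid \mathcal{F}_i]=0$, where $\mathcal{F}_i$ is the $\sigma$-algebra generated by $\xi_0,\zeta_0,\dots,\xi_{i-1},\zeta_{i-1}$.

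\textbf{Step 1: one-step inequality and summation.} Applying \eqref{main_lemma_stoc} with $y=x_*$ gives
\[
h_i\langle\nabla_i,x_i-x_*\rangle\le \tfrac{h_i^2}{2}M_i^2+V_i-V_{i+1},
\]
where now $M_i=\|\nabla_i\|_*\le M:=\max\{M_f,M_g\}$ by \eqref{bound_stoc_f_g_2}. Next use quasar convexity on $I$ and convexity of $g$ on $J$. Define $\delta_i$ as before and subtract the noise term; this gives
\[
\delta_i+\langle\Delta_i,x_i-x_*\rangle\le \tfrac{h_i}{2}M_i^2+\tfrac1{h_i}(V_i-V_{i+1}).
\]
Summing, the bounds \eqref{bound1} and \eqref{bound2} carry over verbatim, since they only use the adaptive step rule, $V_i\le\Theta_0^2$, and $M_i\le M$. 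Combining with the stopping criterion yields the analogue of \eqref{dsfd0123}:
\[
\gamma\sum_{i\in I}(f(x_i)-f(x_*))+|J|\varepsilon+\sum_{i=0}^{k-1}\langle\Delta_i,x_i-x_*\rangle\le \frac{\varepsilon k}{1+1/\gamma}+\Theta_0M .
\]
Feasibility of $\hat x$, i.e.\ $g(\hat x)\le\varepsilon$, is deterministic because $g$ is evaluated exactly. The iteration bound follows exactly as before from $\sum M_i^2\le kM^2$, and this holds almost surely.

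\textbf{Step 2: expectation of the noise.} This is the main obstacle. The sets $I$, $J$ and the stopping time $k$ are random, and $k$ depends on the realized $M_i$, so one cannot simply take expectations term by term. I would handle this as follows. The step rule makes $\langle\Delta_i,x_i-x_*\rangle$ a martingale difference, because $x_i$ is $\mathcal F_i$-measurable. Each increment is bounded by $2M\sqrt{2}\Theta_0$ using \eqref{eq_breg} and \eqref{bound_bregman}. The total number of iterations is bounded by the deterministic $N=\lceil 4M^2\Theta_0^2(1+1/\gamma)^2/\varepsilon^2\rceil$, so $k$ is a bounded stopping time. Optional stopping then gives $\mathbb{E}\sum_{i<k}\langle\Delta_i,x_i-x_*\rangle=0$. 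One caveat: the membership of $i$ in $I$ is $\mathcal F_i$-measurable, since it depends only on $g(x_i)$, so the decomposition is legitimate.

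\textbf{Step 3: conclusion.} Since $f(\hat x)\le f(x_i)$ for $i\in I$, I obtain $\gamma|I|(f(\hat x)-f(x_*))\le$ the left-hand side terms. Then I reproduce the counting argument of the deterministic proof: the bound \eqref{tag38} on $|I|$ and the comparison of $A$ and $B$ show $|I|\ge1$ and $f(\hat x)-f(x_*)\le\frac{\varepsilon}{1+\gamma}+\frac{\Theta_0M}{\gamma|I|}$, up to the noise term. Taking expectations is delicate because the random $|I|$ sits in a denominator. The cleanest fix I would try is to use the lower bound \eqref{tag38}, valid for small $\varepsilon$, which bounds $|I|$ below by the deterministic quantity $A$ after the noise is controlled. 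This requires a version of \eqref{tag38} holding almost surely up to the noise sum, so I may have to lower bound $|I|$ deterministically using $M_i\le M$ and the stopping rule. Then I divide the summed inequality by that deterministic lower bound before taking expectations, which gives $\mathbb{E}f(\hat x)-f(x_*)\le\varepsilon$. The noise expectation vanishes by Step 2.
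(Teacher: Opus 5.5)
Your Steps 1 and 2 are sound, and Step 2 is in fact more careful than the paper. Separating $\langle\Delta_i,x_i-x_*\rangle$ before the random weights $h_i$ enter makes $S_k:=\sum_{i<k}\langle\Delta_i,x_i-x_*\rangle$ a martingale stopped at a stopping time bounded by $N$, so $\mathbb{E}S_k=0$. The paper instead divides by $|I|$ pathwise and asserts $\sum_i\mathbb{E}[\beta_i/|I|]=0$ by citation, even though $|I|$ depends on the noise.

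The gap is Step 3, which is where $\mathbb{E}[f(\hat x)]-f(x_*)\le\varepsilon$ has to come from. The deterministic lower bound on $|I|$ that you want to divide by does not exist. The stochastic analogue of \eqref{tag38} is $|I|\ge\frac{k}{1+\gamma}-\frac{\Theta_0M}{\varepsilon}+\frac{S_k}{\varepsilon}$. Pathwise you only have $|S_k|\le 2\sqrt2M\Theta_0k$, which makes this bound vacuous. The stopping rule combined with $M_i\le M$ bounds the random $k$ only from above, because the sampled $M_i$ have no lower bound. Even $|I|\ge1$ can fail with positive probability. If $g(x_0)>\varepsilon$ and the sampled $\|\nabla g(x_0,\zeta_0)\|_*\le\varepsilon/(2\Theta_0(1+1/\gamma))$, which \eqref{bound_stoc_f_g_1}--\eqref{bound_stoc_f_g_2} allow, the method stops at $k=1$ with $I=\emptyset$. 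In the deterministic method, convexity forces $\|\nabla g(x_i)\|_*>\varepsilon/(\sqrt2\Theta_0)$ on non-productive steps, and that is exactly what is lost here. Also, the $A$-versus-$B$ comparison you plan to reuse is not a valid worst case even deterministically. It substitutes the largest possible $k\approx N$ into \eqref{tag38}, whose right-hand side increases with $k$.

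Moreover, a deterministic $L\le|I|$ would not finish the argument anyway. In $\gamma|I|(f(\hat x)-f(x_*))\le\frac{\varepsilon\gamma|I|}{1+\gamma}+\Theta_0M-S_k$, dividing by $L$ leaves the term $\frac{\varepsilon}{1+\gamma}\cdot\frac{|I|}{L}$, which need not be at most $\varepsilon$. If you first move $\frac{\varepsilon\gamma|I|}{1+\gamma}$ to the left, replacing $|I|$ by $L$ is legitimate only on the event $E=\{f(\hat x)-f(x_*)\ge\varepsilon/(1+\gamma)\}$. You are then left with $\mathbb{E}[S_k\mathbf{1}_E]$, which optional stopping does not make vanish. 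What Steps 1--2 rigorously give is only the weighted estimate $\mathbb{E}\bigl[|I|\,(f(\hat x)-f(x_*))\bigr]\le\frac{\varepsilon}{1+\gamma}\mathbb{E}|I|+\frac{\Theta_0M}{\gamma}$, reading the bracket as $0$ when $I=\emptyset$. Turning this into a bound on $\mathbb{E}[f(\hat x)]$ requires an additional idea, or a weakened (e.g.\ weighted or high-probability) form of the guarantee. The paper's proof does not provide such an idea either: after taking expectations it still carries the random $|I|$ in $\frac{\Theta_0M}{\gamma|I|}$.
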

\begin{proof}

For each iteration \(i = 0, 1, \ldots, k-1\), define
\begin{equation}\label{nabla_i_Delta_i_stoc}
    \nabla_i := 
    \begin{cases}
    \nabla f(x_i, \xi_i) & \text{if } \, i \in I,\\
    \nabla g(x_i, \zeta_i) & \text{if } \, i \in J.
    \end{cases} \quad \Delta_i := \begin{cases}
    \nabla f(x_i, \xi_i) -\nabla f(x_i)  & \text{if } \, i \in I,\\
    \nabla g(x_i, \zeta_i) - \nabla g(x_i) & \text{if } \, i \in J.
    \end{cases} 
\end{equation}

At each iteration, we have
\begin{equation}\label{M_i_sto}
    h_k = \Theta_0 \left(\sum_{i = 0}^{k} M_i^2\right)^{-1/2}, \quad M_i = \begin{cases}
    \|\nabla f(x_i, \xi_i)\|_* & \text{if } \, i \in I,\\
    \|\nabla g(x_i, \zeta_i)\|_* & \text{if } \, i \in J.
    \end{cases}
\end{equation}
Let \(V_i := V(x_*, x_i)\). Note that \(V_i \ge 0\) by \eqref{eq_breg}, and  \(V_i \le \Theta_0^2\) by \eqref{bound_bregman}.

By Lemma \ref{main_lemma_stoch} (inequality \eqref{main_lemma_stoc}) with \(y = x_*\) at iteration \(i= 0, 1, \ldots, k-1\), and using \eqref{nabla_i_Delta_i_stoc}, \eqref{M_i_sto},  we get
\begin{equation}\label{eq_main_all_iters_stoc}
     h_i \langle \nabla_i, x_i - x_* \rangle \leq \frac{h_i^2}{2} M_i^2 + V_i - V_{i+1}.
\end{equation}

For all $i \in I$, since $f$ is $\gamma$-quasar convex, from \eqref{eq_1lemma_stoch} with $\Delta_i = \nabla f(x_i, \xi_i) - \nabla f(x_i)$, we have
\begin{equation}\label{for_productive_stoc}
    \langle \nabla f(x_i, \xi_i), x_i - x_* \rangle + \langle \nabla f(x_i, \xi_i) - \nabla f(x_i),  x_* - x_i \rangle \geq \gamma \left(f(x_i) - f(x_*)\right).
\end{equation}

For all \(i \in J\), by the convexity of \(g\), we have
\begin{equation*}
    \langle \nabla g(x_i), x_i - x_* \rangle \ge g(x_i) - g(x_*).
\end{equation*}
Thus, 
\begin{equation}\label{for_nonproductive_stoc1}
    \langle \nabla g(x_i, \zeta_i), x_i - x_* \rangle + \langle \nabla g(x_i, \zeta_i) - \nabla g(x_i),  x_* - x_i \rangle \geq g(x_i) - g(x_*).
\end{equation}

Therefore, for all \(i = 0, 1, \ldots, k-1\), we have
\begin{equation}\label{hgh12_s}
    \langle \nabla_i, x_i - x_* \rangle + \beta_i \geq \delta_i ,
\end{equation}
where
\[
    \delta_i :=  \begin{cases}
    \gamma \bigl(f(x_i) - f(x_*)\bigr) & \text{if } \, i \in I,
    \\ g(x_i) - g(x_*) & \text{if } \, i \in J,
    \end{cases}
\]
and 
\[
    \beta_i := \begin{cases}
     \langle \nabla f(x_i, \xi_i) - \nabla f(x_i),  x_* - x_i \rangle & \text{if } \, i \in I,
     \\ \langle \nabla g(x_i, \zeta_i) - \nabla g(x_i),   x_* - x_i \rangle & \text{if } \, i \in J.
    \end{cases}
\]

From \eqref{eq_main_all_iters_stoc} and \eqref{hgh12_s}, for all \(i = 0, 1, \ldots, k-1\), we obtain
\[
    h_i \delta_i \le \frac{h_i^2}{2} M_i^2 + V_i - V_{i+1} + h_i \beta_i. 
\]
Thus, 
\begin{equation}\label{jhjkh_s}
    \delta_i \le \frac{h_i}{2} M_i^2 + \frac{1}{h_i} \left(V_i - V_{i+1}\right) + \beta_i \quad \forall \, i = 0, 1, \ldots, k-1.
\end{equation}

Summing up \eqref{jhjkh_s} for \(i = 0, 1, \dots, k-1\), we get
\begin{equation}\label{lkl1_s}
    \sum_{i=0}^{k-1} \delta_i \le \sum_{i=0}^{k-1} \frac{h_i}{2} M_i^2 + \sum_{i=0}^{k-1} \frac{1}{h_i} \left(V_i - V_{i+1}\right) + \sum_{i=0}^{k-1} \beta_i.
\end{equation}

For the first sum in the right-hand side of \eqref{lkl1_s}, as in the proof of Theorem \ref{theo_alg1_determ}, we have (see \eqref{bound1})
\begin{equation}\label{bound1_s}
    \sum_{i=0}^{k-1} \frac{h_i}{2} M_i^2 \leq \Theta_0 \left( \sum_{i=0}^{k-1} M_i^2 \right)^{1/2}.
\end{equation}

Also, for the second sum in the right-hand side of \eqref{lkl1_s}, we have (see \eqref{bound2})
\begin{equation}\label{bound2_s}
    \sum_{i=0}^{k-1} \frac{1}{h_i} (V_i - V_{i+1}) \leq \Theta_0 \left( \sum_{i=0}^{k-1} M_i^2 \right)^{1/2} + \Theta_0 M,
\end{equation}
where \(M := \max\{M_f, M_g\}\). 

From \eqref{lkl1_s}, \eqref{bound1_s}, and \eqref{bound2_s} we get the following inequality
\begin{equation}\label{hjlk12_s}
    \sum_{i=0}^{k-1} \delta_i \le 2\Theta_0 \left( \sum_{i=0}^{k-1} M_i^2 \right)^{1/2} + \Theta_0 M + \sum_{i=0}^{k-1} \beta_i.
\end{equation}

From the stopping criterion, we have
\[
    \left( \sum_{i=0}^{k-1} M_i^2 \right)^{1/2} \le \frac{\varepsilon k}{2\Theta_0 (1 + 1/\gamma)}.
\]

Thus, \eqref{hjlk12_s} will be as follows
\begin{equation}\label{dsfd0123_s}
    \sum_{i=0}^{k-1} \delta_i \leq \frac{\varepsilon k}{1 + 1/\gamma} + \Theta_0 M + \sum_{i=0}^{k-1} \beta_i.
\end{equation}

We have \(|I| + |J| = k\). So (see \eqref{545ss}), 
\begin{align}\label{545ss_s}
    \sum_{i=0}^{k-1} \delta_i \geq \gamma \sum_{i \in I} (f(x_i)-f(x_*)) + |J| \varepsilon. 
\end{align}

Hence, from \eqref{dsfd0123_s} and \eqref{545ss_s}, we find 
\[
    \gamma \sum_{i \in I} (f(x_i)-f(x_*)) \leq \frac{\varepsilon (\gamma |I| - |J|)}{ 1 + \gamma } + \Theta_0 M + \sum_{i=0}^{k-1} \beta_i.
\]

Since $|J| \geq 0$, WE GET 
\[
    \sum_{i \in I} (f(x_i)-f(x_*)) \le \frac{\varepsilon |I|}{1 + \gamma } + \frac{\Theta_0 M}{\gamma} + \frac{1}{\gamma} \sum_{i=0}^{k-1} \beta_i.
\]

Consequently, 
\[
    f(\hat{x}) - f(x_*) \leq \frac{\varepsilon}{\gamma + 1} + \frac{\Theta_0 M}{\gamma |I|} + \frac{1}{\gamma} \sum_{i=0}^{k-1} \frac{\beta_i}{|I|}.
\]

Now, by taking the expectation, and since $\sum_{i=0}^{k-1} \mathbb{E} \left[\frac{\beta_i}{|I|}\right] = 0 $ (see \cite{Bayandina2017Stochastic}), we obtain
\[
    \mathbb{E}[f(\hat{x})] - f(x_*) \leq \frac{\varepsilon}{\gamma + 1} + \frac{\Theta_0 M}{\gamma |I|}.
\]

But, for sufficiently small $\varepsilon > 0$, in a similar way that was in the proof of Theorem \ref{theo_alg1_determ}, we can prove the desired result,
$$
    \mathbb{E}[f(\hat{x})] - f(x_*) \leq  \varepsilon.
$$
In addition $g(\hat{x}) \leq \varepsilon$, and $I \ne \emptyset.$
\end{proof}

\subsection{Modification of Standard Adaptive Stochastic Mirror Descent Algorithm for Problems with Many Functional Constraints.}\label{modif_stoc_GI}

In this subsection, we address the stochastic formulation of problem \eqref{general_problem_1} in the presence of a large number of functional constraints. In this setting, we modify Algorithm \ref{alg:stocadaptive_mirror1} in a manner similar to that proposed in \cite{Alkousa2020Modification}. The proposed modification is presented as Algorithm \ref{alg:stocadaptive_mirror1_mod}.

\begin{algorithm}
\caption{Modification of Adaptive Stochastic Mirror Descent Algorithm for Problems with One Functional Constraint.}
\label{alg:stocadaptive_mirror1_mod}
\begin{algorithmic}[1]
\REQUIRE accuracy \(\varepsilon > 0\), starting point $x_0$, prox-function $d(\cdot)$, \(\Theta_0\) s.t. \eqref{bound_bregman} holds.
\STATE Initialize the set \(I\) as empty set.
\STATE Set \(k = 0\).
\REPEAT
\IF{\(g_i(x_k) \leq \varepsilon \, \forall i = 1, \ldots, m, \)}
    \STATE \(M_k = \|\nabla f(x_k, \xi_k)\|_{*}\),
    \STATE \(h_k = \Theta_0 \left( \sum_{i=0}^k M_i^2 \right)^{-1/2}\),
    \STATE \(x_{k+1} = \operatorname{Mirr}_{x_k} \left( h_k \nabla f(x_k, \xi_k) \right)\), \quad “productive step”
    \STATE Add \(k\) to \(I\).
    \ELSE
    \STATE (i.e., $ \exists q = q(k) \in \{1, \ldots, m\}, \text{s.t. } g_{q(k)}(x_k) > \varepsilon$)
    \STATE \(M_k = \|\nabla g_{q(k)}(x_k, \zeta_k)\|_{*}\),
    \STATE \(h_k = \Theta_0 \left( \sum_{i=0}^k M_i^2 \right)^{-1/2}\),
    \STATE \(x_{k+1} = \operatorname{Mirr}_{x_k} \left( h_k \nabla g_{q(k)}(x_k, \zeta_k) \right)\). \quad “non-productive step”
\ENDIF
\STATE Set \(k = k + 1\).
\UNTIL{\(k \geq \frac{2\Theta_0}{\varepsilon} \left(1 + \frac{1}{\gamma}\right) \left( \sum_{i=0}^{k-1} M_i^2 \right)^{1/2}\).}
\STATE \textbf{Output:} \(\hat{x} = \arg\min_{ i \in I} f(x_i)\).
\end{algorithmic}
\end{algorithm}

For Algorithm \ref{alg:stocadaptive_mirror1_mod}, we have the following result.

\begin{theorem}\label{theorem_stoc_adaptive_mod}
Let $f: Q \longrightarrow \mathbb{R}$ be a Lipschitz continuous \eqref{Lip_f}, $\gamma$-quasar convex function \eqref{def_quasar}, and $g_i: Q \longrightarrow \mathbb{R}$ be  convex Lipschitz continuous functions with constants $M_{g_i} > 0$ \eqref{Lip_g}. Assume that for all $x \in Q$, we have $\|\nabla f(x, \xi)\|_* \leq M_f$, and $\|\nabla g_i(x, \zeta)\|_* \leq M_g \, \forall i =1, \ldots, m$, where $M_g = \max_{1 \leq i \leq m} \{M_{g_i}\}$. Then, for Algorithm \ref{alg:stocadaptive_mirror1_mod}, $I \ne \emptyset$, it stops after not more than
\[
    k = \left\lceil \frac{4 \max\{M_f^2, M_g^2\} \Theta_0^2 \left(1 + \frac{1}{\gamma}\right)^2}{\varepsilon^2} \right\rceil 
\]
iterations and returns \(\hat{x}\), such that
\begin{equation}\label{eq_results_theo2_mod}
    \mathbb{E}[f(\hat{x})] - f(x_*) \leq  \varepsilon, \quad \text{and} \quad g(\hat{x}) \leq \varepsilon.
\end{equation}
\end{theorem}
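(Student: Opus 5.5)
The proof of Theorem \ref{theorem_stoc_adaptive_mod} combines the proofs of Theorem \ref{theo_modifalg1_determ} and Theorem \ref{theorem_stoc_adaptive}. For each iteration $i=0,\dots,k-1$ we set
\[
\nabla_i := \nabla f(x_i,\xi_i) \ \text{if } i\in I, \qquad \nabla_i := \nabla g_{q(i)}(x_i,\zeta_i) \ \text{if } i\in J,
\]
and let $\Delta_i$ be the corresponding deviation from the true subgradient ($\nabla f(x_i)$ or $\nabla g_{q(i)}(x_i)$). We assume, as in \eqref{bound_stoc_f_g_1}, that each $\nabla g_{q}(x,\zeta)$ is an unbiased estimate of a subgradient of $g_q$, with the noise at step $i$ independent of the past given $x_i$. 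The index $q(i)$ is a deterministic function of $x_i$, because the constraint values are evaluated exactly. Hence $\mathbb{E}[\Delta_i \mid x_0,\dots,x_i]=0$ on both types of steps.

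Applying Lemma \ref{main_lemma_stoch}, inequality \eqref{main_lemma_stoc}, with $y=x_*$ gives
\[
h_i\langle\nabla_i,x_i-x_*\rangle\le \tfrac{h_i^2}{2}M_i^2+V_i-V_{i+1}.
\]
On productive steps we use $\gamma$-quasar convexity \eqref{def_quasar}. On non-productive steps we use convexity of $g_{q(i)}$. Together these give $\langle\nabla_i,x_i-x_*\rangle+\beta_i\ge\delta_i$, where
\[
\beta_i=\langle\Delta_i,x_*-x_i\rangle,
\]
$\delta_i=\gamma(f(x_i)-f(x_*))$ on $I$, and $\delta_i=g_{q(i)}(x_i)-g_{q(i)}(x_*)$ on $J$. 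We divide by $h_i$ and sum over $i$. Then we bound the two resulting sums exactly as in \eqref{bound1} and \eqref{bound2}, with $M=\max\{M_f,M_g\}$ bounding every $M_i$ almost surely. Applying the stopping rule yields the analogue of \eqref{dsfd0123_s}:
\[
\sum_i\delta_i\le \frac{\varepsilon k}{1+1/\gamma}+\Theta_0M+\sum_i\beta_i .
\]

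For $i\in J$ we have $g_{q(i)}(x_i)>\varepsilon$, and $g_{q(i)}(x_*)\le g(x_*)\le 0$ because $x_*$ is feasible. Hence $\delta_i>\varepsilon$, and \eqref{545ss_s} holds verbatim. Following the same algebra as in Theorem \ref{theorem_stoc_adaptive}, and using that $\hat x$ minimizes $f$ over $I$, we obtain
\[
f(\hat x)-f(x_*)\le \frac{\varepsilon}{1+\gamma}+\frac{\Theta_0M}{\gamma|I|}+\frac{1}{\gamma|I|}\sum_i\beta_i .
\]
Taking expectations removes the $\beta$-term by the martingale-difference property above, as justified in \cite{Bayandina2017Stochastic,Alkousa2020Modification}. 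The remaining steps repeat those of Theorem \ref{theo_alg1_determ}: the lower bound \eqref{tag38} on $|I|$, the comparison $B\gg A$ for small $\varepsilon$ showing both $I\neq\emptyset$ and $|I|\ge A$, and the iteration bound obtained from $\sum M_i^2\le kM^2$ in the stopping rule. The constraint guarantee is immediate, since every $i\in I$ satisfies $g_j(x_i)\le\varepsilon$ for all $j$, so $g(\hat x)=\max_j g_j(\hat x)\le\varepsilon$.

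The main obstacle is the expectation step. Both $|I|$ and $k$ are random, and $|I|$ is a stopping-time-dependent quantity, so $\mathbb{E}[\sum_i\beta_i/|I|]=0$ does not follow directly from the zero conditional means. My first attempt would follow the approach of \cite{Bayandina2017Stochastic}: multiply the bound through by $|I|$ before taking expectations. I would then use optional stopping for the martingale $\sum_{i<k}\beta_i$, whose increments are bounded by $2M\sqrt{2}\,\Theta_0$ thanks to \eqref{eq_breg}--\eqref{bound_bregman}, with $k$ bounded by the deterministic iteration bound. This gives $\mathbb{E}\sum_i\beta_i=0$. Finally I would combine it with the almost-sure lower bound on $|I|$ to conclude the expected $\varepsilon$-accuracy.
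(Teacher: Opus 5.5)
Your argument coincides with the paper's up to the bound $f(\hat x)-f(x_*)\le\frac{\varepsilon}{1+\gamma}+\frac{\Theta_0M}{\gamma|I|}+\frac{1}{\gamma|I|}\sum_i\beta_i$. From there the paper simply asserts $\sum_i\mathbb{E}[\beta_i/|I|]=0$, with a citation, and defers to the deterministic ``$B\gg A$'' reasoning. You are right that this identity is unjustified. Your optional-stopping step is also sound: $k$ is a stopping time, since the decision to stop uses only $M_0,\dots,M_{k-1}$, and it is bounded by the deterministic iteration count, so $\mathbb{E}\sum_{i<k}\beta_i=0$.

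The gap is the final step, ``combine it with the almost-sure lower bound on $|I|$.'' In the stochastic setting no such bound exists. Redoing \eqref{tag38} from \eqref{0123} and \eqref{545ss_mod_mod} gives only $|I|\ge\frac{k}{1+\gamma}-\frac{\Theta_0M}{\varepsilon}-\frac{1}{\varepsilon}\sum_i\beta_i$. The sum $\sum_i\beta_i$ fluctuates on the scale $M\Theta_0\sqrt{k}$, which for $k$ of the order of the iteration bound is comparable to $\varepsilon k$. In fact $I=\emptyset$ can occur with positive probability. Take $Q=[-2,2]$, $d(x)=x^2/2$, $m=1$, $g_1(x)=x-\frac12$, $f(x)=|x|$, the infeasible start $x_0=1$, and $\nabla g_1(x,\zeta)\in\{3,-1\}$ with probability $\frac12$ each. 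On the event that all samples equal $-1$, the iterates move right and every step is non-productive. Then $M_i\equiv 1$ and the algorithm halts at a deterministic index $k$, so this event has probability $2^{-k}>0$. Hence neither $I\neq\emptyset$ nor $|I|\ge A$ can be proved almost surely, and $\hat x$ may be undefined.

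Even with a deterministic $L\le|I|$ the conclusion does not follow. Multiplying by $|I|$ and taking expectations gives only $\mathbb{E}\bigl[|I|\bigl(f(\hat x)-f(x_*)\bigr)\bigr]\le\frac{\varepsilon}{1+\gamma}\mathbb{E}|I|+\frac{\Theta_0M}{\gamma}$. This yields $\mathbb{E}[f(\hat x)]-f(x_*)\le\frac{\varepsilon}{1+\gamma}\cdot\frac{\mathbb{E}|I|}{L}+\frac{\Theta_0M}{\gamma L}$. Since $|I|$ is random and correlated with $f(\hat x)$, nothing controls the ratio $\mathbb{E}|I|/L\ge 1$, so the right-hand side need not be at most $\varepsilon$.

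The deterministic $B\gg A$ argument you plan to reuse is itself unsound. It replaces the actual stopping index $k$ by its largest possible value, whereas the adaptive rule stops early when the observed $M_i$ are small, and then \eqref{tag38} is vacuous. That part is easy to repair. The telescoped sum is at most $\Theta_0^2/h_{k-1}=\Theta_0\bigl(\sum_{i<k}M_i^2\bigr)^{1/2}$, so the $\Theta_0M$ term never appears. One then gets $\gamma\sum_{i\in I}\bigl(f(x_i)-f(x_*)\bigr)+|J|\varepsilon\le\frac{\gamma\varepsilon k}{1+\gamma}$, which gives $I\neq\emptyset$ and $f(\hat x)-f(x_*)\le\frac{\varepsilon}{1+\gamma}$ directly in the deterministic case.

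In the stochastic case, however, the same computation leaves $+\sum_i\beta_i$ on the right-hand side. Its positive part is exactly what blocks both $I\neq\emptyset$ and the normalisation by $|I|$. Closing the argument needs a genuinely different statement, for instance a high-probability guarantee from a concentration inequality for $\sum_i\beta_i$. That in turn needs a lower bound on $k$ of order $M^2\Theta_0^2/\varepsilon^2$, which the adaptive stopping rule does not provide. The martingale identity alone is not enough.
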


\begin{proof}
For each iteration \(i = 0, 1, \ldots, k-1\), define
\begin{equation}\label{nabla_i_Delta_i_stoc_mod}
    \nabla_i := 
    \begin{cases}
    \nabla f(x_i, \xi_i) & \text{if } \, i \in I,\\
    \nabla g_{q(i)}(x_i, \zeta_i) & \text{if } \, i \in J.
    \end{cases}  \Delta_i := \begin{cases}
    \nabla f(x_i, \xi_i) -\nabla f(x_i)  & \text{if } \, i \in I,\\
    \nabla g_{q(i)}(x_i, \zeta_i) - \nabla g_{q(i)}(x_i) & \text{if } \, i \in J.
    \end{cases} 
\end{equation}

At each iteration, we have
\begin{equation}\label{M_i_sto_mod}
    h_k = \Theta_0 \left(\sum_{i = 0}^{k} M_i^2\right)^{-1/2}, \quad M_i = \begin{cases}
    \|\nabla f(x_i, \xi_i)\|_* & \text{if } \, i \in I,\\
    \|\nabla g_{q(i)}(x_i, \zeta_i)\|_* & \text{if } \, i \in J.
    \end{cases}
\end{equation}
Let \(V_i := V(x_*, x_i)\). By Lemma \ref{main_lemma_stoch} (inequality \eqref{main_lemma_stoc}) with \(y = x_*\) at iteration \(i= 0, 1, \ldots, k-1\), and using \eqref{nabla_i_Delta_i_stoc_mod}, \eqref{M_i_sto_mod},  we get
\begin{equation}\label{eq_main_all_iters_stoc_mod}
     h_i \langle \nabla_i, x_i - x_* \rangle \leq \frac{h_i^2}{2} M_i^2 + V_i - V_{i+1}.
\end{equation}

For all $i \in I$, since $f$ is $\gamma$-quasar convex, from \eqref{eq_1lemma_stoch} with $\Delta_i = \nabla f(x_i, \xi_i) - \nabla f(x_i)$, we have
\begin{equation*}
    \langle \nabla f(x_i, \xi_i), x_i - x_* \rangle + \langle \nabla f(x_i, \xi_i) - \nabla f(x_i),  x_* - x_i \rangle \geq \gamma \left(f(x_i) - f(x_*)\right).
\end{equation*}

For all \(i \in J\), by the convexity of \(g_{q(i)}\) we have
\begin{equation*}
    \langle \nabla g_{q(i)}(x_i), x_i - x_* \rangle \ge g_{q(i)}(x_i) - g_{q(i)}(x_*).
\end{equation*}
Thus, 
\begin{align*}
    g_{q(i)}(x_i) - g_{q(i)}(x_*) & \leq \langle \nabla g_{q(i)}(x_i, \zeta_i), x_i - x_* \rangle  \nonumber
    \\& \quad + \langle \nabla g_{q(i)}(x_i, \zeta_i) - \nabla g_{q(i)}(x_i),  x_* - x_i \rangle.
\end{align*}

Thus, for all \(i = 0, 1, \ldots, k-1\), we have
\begin{equation}\label{hgh12_s_mod}
    \langle \nabla_i, x_i - x_* \rangle + \beta_i \geq \delta_i ,
\end{equation}
where
\[
    \delta_i := \begin{cases}
    \gamma \bigl(f(x_i) - f(x_*)\bigr) & \text{if } \, i \in I,
    \\g_{q(i)}(x_i) - g_{q(i)}(x_*) & \text{if } \, i \in J,
    \end{cases}
\]
and 
\[
    \beta_i :=  \begin{cases}
     \langle \nabla f(x_i, \xi_i) - \nabla f(x_i),  x_* - x_i \rangle & \text{if } \, i \in I,
     \\ \langle \nabla g_{q(i)}(x_i, \zeta_i) - \nabla g_{q(i)}(x_i),   x_* - x_i \rangle & \text{if } \, i \in J.
    \end{cases}
\]

Thus, from \eqref{eq_main_all_iters_stoc_mod} and \eqref{hgh12_s_mod}, for all \(i = 0, 1, \ldots, k-1\), we obtain
\begin{equation}\label{jhjkh_s_mod}
    \delta_i \le \frac{h_i}{2} M_i^2 + \frac{1}{h_i} \left(V_i - V_{i+1}\right) + \beta_i \quad \forall \, i = 0, 1, \ldots, k-1.
\end{equation}

Summing up \eqref{jhjkh_s_mod} for \(i = 0, 1, \dots, k-1\), we get
\begin{equation}\label{lkl1_s_mod}
    \sum_{i=0}^{k-1} \delta_i \le \sum_{i=0}^{k-1} \frac{h_i}{2} M_i^2 + \sum_{i=0}^{k-1} \frac{1}{h_i} \left(V_i - V_{i+1}\right) + \sum_{i=0}^{k-1} \beta_i.
\end{equation}

For the first sum in the right-hand side of \eqref{lkl1_s_mod}, we have (see \eqref{bound1})
\begin{equation}\label{bound1_s_mod}
    \sum_{i=0}^{k-1} \frac{h_i}{2} M_i^2 \leq \Theta_0 \left( \sum_{i=0}^{k-1} M_i^2 \right)^{1/2}.
\end{equation}

Also, for the second sum in the right-hand side of \eqref{lkl1_s_mod}, we have (see \eqref{bound2})
\begin{equation}\label{bound2_s_mod}
    \sum_{i=0}^{k-1} \frac{1}{h_i} (V_i - V_{i+1}) \leq \Theta_0 \left( \sum_{i=0}^{k-1} M_i^2 \right)^{1/2} + \Theta_0 M,
\end{equation}
where \(M := \max\{M_f, M_g\}\).

From \eqref{lkl1_s_mod}, \eqref{bound1_s_mod}, \eqref{bound2_s_mod}, and from stopping criterion, we get the following inequality
\begin{equation}\label{0123}
    \sum_{i=0}^{k-1} \delta_i \leq \frac{\varepsilon k}{1 + 1/\gamma} + \Theta_0 M + \sum_{i=0}^{k-1} \beta_i.
\end{equation}

Recall that for \(i \in J\), we have \(g_{q(i)}(x_i) > \varepsilon\) and \(g_{q(i)}(x_*) \le 0\), so
\[
    \delta_i = g_{q(i)}(x_i) - g_{q(i)}(x_*) \ge g_{q(i)}(x_i) > \varepsilon.
\]

We have \(|I| + |J| = k\). So, we get the following
\begin{align}\label{545ss_mod_mod}
    \sum_{i=0}^{k-1} \delta_i & = \sum_{i \in I} \gamma \bigl(f(x_i)-f(x_*)\bigr) + \sum_{i \in J} (g_{q(i)}(x_i)-g_{q(i)}(x_*)) \nonumber
    \\& \ge \gamma \sum_{i \in I} (f(x_i)-f(x_*)) + |J| \varepsilon. 
\end{align}

Hence, from \eqref{0123} and \eqref{545ss_mod_mod}, we find 
\[
    \gamma \sum_{i \in I} (f(x_i)-f(x_*)) \leq \frac{\varepsilon (\gamma |I| - |J|)}{ 1 + \gamma } + \Theta_0 M + \sum_{i=0}^{k-1} \beta_i.
\]

Thus, 
\[
    \sum_{i \in I} (f(x_i)-f(x_*)) \le \frac{\varepsilon |I|}{1 + \gamma } + \frac{\Theta_0 M}{\gamma} + \frac{1}{\gamma} \sum_{i=0}^{k-1} \beta_i.
\]

Consequently, we get 
\[
    f(\hat{x}) - f(x_*) \leq \frac{\varepsilon}{\gamma + 1} + \frac{\Theta_0 M}{\gamma |I|} + \frac{1}{\gamma} \sum_{i=0}^{k-1} \frac{\beta_i}{|I|}.
\]

Now, by taking the expectation, and since $\sum_{i=0}^{k-1} \mathbb{E} \left[\frac{\beta_i}{|I|}\right] = 0 $ (see \cite{Bayandina2017Stochastic}), we obtain
\[
    \mathbb{E}[f(\hat{x})] - f(x_*) \leq \frac{\varepsilon}{\gamma + 1} + \frac{\Theta_0 M}{\gamma |I|}.
\]

But, for sufficiently small $\varepsilon > 0$, in a similar way that was in the proof of Theorem \ref{theorem_stoc_adaptive}, we can prove the desired result,
$$
    \mathbb{E}[f(\hat{x})] - f(x_*) \leq  \varepsilon.
$$
In addition $g(\hat{x}) \leq \varepsilon$, and $I \ne \emptyset.$
\end{proof}

\section{Mirror Descent Algorithms for Quasar Convex Problems With Functional Constraints (Group II)}\label{section_group_II}

In this section, we propose the second group of algorithms for solving problems \eqref{general_problem_1} and \eqref{general_problem} in both deterministic and stochastic settings. Although structurally similar to the algorithms presented in Section \ref{section_group_I}, the proposed algorithms feature distinct step sizes and stopping criteria.

\subsection{Standard Adaptive Deterministic Mirror Descent Algorithm for Problems with One Functional Constraint}

\begin{alg}[Standard Adaptive Deterministic Mirror Descent Algorithm for Problems with One Functional Constraint]\label{alg_determ_II}
\hfill \break

\textbf{If} $g(x_k) \leq \varepsilon$,
$$
    M_k = \|\nabla f(x_k)\|_{*}, \quad h_k = \frac{\gamma \varepsilon}{M_k^2}, \quad x_{k+1} = \operatorname{Mirr}_{x_k} \left( h_k \nabla f(x_k) \right).
$$

\textbf{If} $g(x_k) > \varepsilon$,
$$
    M_k = \|\nabla g(x_k)\|_{*}, \quad h_k = \frac{\gamma \varepsilon}{M_k^2}, \quad x_{k+1} = \operatorname{Mirr}_{x_k} \left( h_k \nabla g(x_k) \right).
$$

The \textbf{Stopping criterion} is
\begin{equation}\label{stop_alg1_II}
    \sum_{i = 0}^{k-1} \frac{1}{M_i^2} \geq \frac{2 \Theta_0^2}{\gamma^2 \varepsilon^2}\left(1+ \frac{1}{\gamma}\right)^2. 
\end{equation}
\end{alg}

For Algorithm \ref{alg_determ_II}, we have the following result. 

\begin{theorem}\label{theo_alg1_determ_II}
Let $f: Q \longrightarrow \mathbb{R}$ be a Lipschitz continuous \eqref{Lip_f}, $\gamma$-quasar convex function \eqref{def_quasar}, and $g: Q \longrightarrow \mathbb{R}$ be a convex Lipschitz continuous function \eqref{Lip_g}. Then, with assumption \eqref{bound_bregman}, for Algorithm \ref{alg:adaptive_mirror1}, $I \ne \emptyset$, it stops after not more than
\begin{equation}\label{itersII}
    k = \left\lceil \frac{2 \Theta_0^2 \max\{M_f^2, M_g^2\}}{\gamma^2 \varepsilon^2}\left(1+ \frac{1}{\gamma}\right)^2 \right\rceil 
\end{equation}
iterations and returns \(\hat{x}= \arg\min_{ i \in I} f(x_i)\), such that
\begin{equation*}
    f(\hat{x}) - f(x_*) \leq  \varepsilon, \quad \text{and} \quad  g(\hat{x}) \leq \varepsilon.
\end{equation*}
\end{theorem}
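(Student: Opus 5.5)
We will argue by contradiction, which avoids the ``sufficiently small $\varepsilon$'' considerations used for Group I. (We read the statement as referring to Algorithm \ref{alg_determ_II}, since its stopping rule and bound \eqref{itersII} are the ones involved.) As before, let $\nabla_i$ be $\nabla f(x_i)$ for $i\in I$ and $\nabla g(x_i)$ for $i\in J$, let $M_i=\|\nabla_i\|_*$, and let $V_i:=V(x_*,x_i)$. Applying Lemma \ref{main_lemma_determ} (inequality \eqref{main_lemma_1}) with $y=x_*$ at each iteration gives
\[
h_i\langle \nabla_i, x_i-x_*\rangle \le \frac{h_i^2}{2}M_i^2 + V_i - V_{i+1}.
\]
We substitute $h_i=\gamma\varepsilon/M_i^2$ and sum over $i=0,\dots,k-1$. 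The Bregman terms telescope, and we drop $-V_k\le 0$ and use $V_0\le\Theta_0^2$ from \eqref{bound_bregman}. This yields
\[
\sum_{i=0}^{k-1}\frac{\gamma\varepsilon}{M_i^2}\langle \nabla_i, x_i-x_*\rangle \le \frac{\gamma^2\varepsilon^2}{2}\sum_{i=0}^{k-1}\frac{1}{M_i^2} + \Theta_0^2 .
\]
Because the step sizes are weighted by $1/M_i^2$, no adaptive-sum bounds such as \eqref{bound1} or \eqref{bound2} are needed here.

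Next we bound the left-hand side from below, term by term. For $i\in J$, convexity of $g$ together with $g(x_*)\le 0$ gives $\langle\nabla g(x_i),x_i-x_*\rangle\ge g(x_i)-g(x_*)>\varepsilon\ge\gamma\varepsilon$. For $i\in I$, quasar convexity \eqref{def_quasar} gives $\langle\nabla f(x_i),x_i-x_*\rangle\ge\gamma(f(x_i)-f(x_*))$.

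Suppose, for contradiction, that $f(x_i)-f(x_*)>\varepsilon$ for every $i\in I$; this case includes $I=\emptyset$. Then every term satisfies $\langle\nabla_i,x_i-x_*\rangle>\gamma\varepsilon$, so the left-hand side exceeds $\gamma^2\varepsilon^2\sum_i 1/M_i^2$. Combining this with the upper bound gives
\[
\sum_{i=0}^{k-1}\frac{1}{M_i^2}<\frac{2\Theta_0^2}{\gamma^2\varepsilon^2}.
\]
This contradicts the stopping criterion \eqref{stop_alg1_II}, whose right-hand side is even larger by the factor $(1+1/\gamma)^2\ge1$. Hence some productive $i\in I$ has $f(x_i)-f(x_*)\le\varepsilon$. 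In particular $I\neq\emptyset$, and $\hat x=\arg\min_{i\in I}f(x_i)$ satisfies $f(\hat x)-f(x_*)\le\varepsilon$. The constraint bound $g(\hat x)\le\varepsilon$ holds because $\hat x$ is a productive point.

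For the iteration count, we use $M_i\le M:=\max\{M_f,M_g\}$, which gives $\sum_{i=0}^{k-1}1/M_i^2\ge k/M^2$. Therefore \eqref{stop_alg1_II} is guaranteed to hold as soon as $k\ge \frac{2\Theta_0^2M^2}{\gamma^2\varepsilon^2}(1+1/\gamma)^2$, which gives \eqref{itersII}.

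The only delicate step is the lower bound on the left-hand side. It has to cover both kinds of steps with the same threshold $\gamma\varepsilon$, and the key fact that makes this work is $\gamma\le1$, so that $\varepsilon\ge\gamma\varepsilon$ on non-productive steps. A minor technical point is the interiority assumption $x\in Q^\circ$ in Lemma \ref{main_lemma_determ}, which we take for granted as the paper does in Group I. We also treat $M_i>0$, since $M_i=0$ would make the step size undefined; if $M_i=0$ on a productive step then $x_i$ is already optimal by \eqref{def_quasar}.
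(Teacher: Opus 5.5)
Your argument is correct, and it reaches the conclusion by a different route than the paper.

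\textbf{The paper's route.} It keeps the two kinds of steps on different scales. It uses the full bound $\langle\nabla g(x_i),x_i-x_*\rangle>\varepsilon$ on $J$. It first proves $I\neq\emptyset$ by a separate contradiction that involves only non-productive steps. It then derives the explicit weighted estimate
\[
\bigl(f(\hat x)-f(x_*)\bigr)\sum_{i\in I}\frac{1}{M_i^2}\le \frac{\varepsilon}{2}\sum_{i\in I}\frac{1}{M_i^2}+\frac{\Theta_0^2}{\gamma^2\varepsilon}-\frac{\varepsilon}{2\gamma}\sum_{i\in J}\frac{1}{M_i^2}.
\]
Finally, it absorbs $\Theta_0^2/(\gamma^2\varepsilon)$ via the stopping rule to obtain $f(\hat x)-f(x_*)\le (1+\gamma)\varepsilon/2$.

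\textbf{Your route.} You deliberately weaken the non-productive bound to the common threshold $\gamma\varepsilon$. A single contradiction over all iterations then yields both $I\neq\emptyset$ and the accuracy bound at once. This is shorter, and it makes plain that only $\sum_i 1/M_i^2\ge 2\Theta_0^2/(\gamma^2\varepsilon^2)$ is used. So the factor $(1+1/\gamma)^2$ in the stopping rule and in \eqref{itersII} is superfluous; the paper's own computation would in fact also go through without it.

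\textbf{What the paper's approach buys.} A direct inequality gives a quantitative bound that survives taking expectations. This is the form reused in the stochastic Theorem \ref{theo_alg1_stoc_II}. There the noise terms vanish only in expectation, so a pathwise ``every productive iterate is bad'' contradiction is not directly available.

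\textbf{On your remark about $M_i=0$.} This is a point the paper skips. On non-productive steps it cannot occur at all, since $\nabla g(x_i)=0$ would give $g(x_*)\ge g(x_i)>\varepsilon$.
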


\begin{proof}
At each iteration, we have
\begin{equation}\label{M_i_alg_determ_II}
     M_i = \begin{cases}
    \|\nabla f(x_i)\|_* & \text{if } \, i \in I,\\
    \|\nabla g(x_i)\|_* & \text{if } \, i \in J.
    \end{cases}
\end{equation}
Let \(V_i := V(x_*, x_i)\). Note that \(V_i \ge 0\) by \eqref{eq_breg}, and  \(V_i \le \Theta_0^2\) by \eqref{bound_bregman}.

For any $i \in I$, applying Lemma \ref{main_lemma_determ} (inequality \eqref{eq_2lemma}), and using \eqref{M_i_alg_determ_II}, we get
\begin{equation}\label{eq_main_iI_II}
     h_i (f(x_i) - f(x_*)) \leq \frac{1}{\gamma}\left(\frac{h_i^2}{2} M_i^2 + V_i - V_{i+1}\right).
\end{equation}
With $h_i = \frac{\gamma \varepsilon}{M_i^2} \; (\forall i \in I)$, we find 
\begin{equation}\label{4555200}
    \frac{\gamma \varepsilon}{M_i^2} (f(x_i) - f(x_*)) \leq \frac{\gamma \varepsilon^2}{2 M_i^2} + \frac{1}{\gamma} \left(V_i - V_{i+1}\right).
\end{equation}

Summing up both sides of \eqref{4555200}, over all $i \in I$, we find 
\begin{equation}\label{main_eqI}
    \gamma^2 \varepsilon  \sum_{i \in I} \frac{f(x_i) - f(x_*)}{M_i^2} \leq \frac{\gamma^2 \varepsilon^2}{2} \sum_{i \in I}  \frac{1}{M_i^2} + \sum_{i \in I}  \left(V_i - V_{i+1}\right). 
\end{equation}

For any $i \in J$, applying Lemma \ref{main_lemma_determ} (inequality \eqref{main_lemma_1}), we get
$$
    h_i \langle \nabla g(x_i), x_i - x_* \rangle \le \frac{h_i^2}{2} \|\nabla g(x_i)\|_*^2 + V_i - V_{i+1}. 
$$

Since $g$ is convex, $g(x_i) > \varepsilon \; (\forall i \in J)$, and $g(x_*) \leq 0$, we have 
$$
    \varepsilon < g(x_i) \leq g(x_i) - g(x_*) \leq \langle \nabla g(x_i), x_i - x_* \rangle,
$$
Thus, 
$$
     h_i \varepsilon \le \frac{h_i^2}{2} \|\nabla g(x_i)\|_*^2 + V_i - V_{i+1}. 
$$

With $h_i = \frac{\gamma \varepsilon}{M_i^2} \; (\forall i \in J)$, we find
$$
    \frac{\gamma \varepsilon^2}{M_i^2} < \frac{\gamma^2 \varepsilon^2}{2 M_i^2} + V_i - V_{i+1}.
$$
That is, 
$$
    \left(\gamma - \frac{\gamma^2}{2}\right) \frac{\varepsilon^2}{M_i^2} < V_i - V_{i+1} \quad \forall i \in J.
$$

Since $\gamma \le 1$, we find $\gamma - \frac{\gamma^2}{2} \ge \frac{\gamma}{2}$. Therefore, 
\begin{equation}\label{45552000}
    \frac{\gamma}{2} \frac{\varepsilon^2}{M_i^2} < V_i - V_{i+1} \quad \forall i \in J.
\end{equation}

Summing up both sides of \eqref{45552000}, over all $i \in J$, we find 
\begin{equation}\label{main_eqII}
   \frac{\gamma \varepsilon^2}{2}  \sum_{i \in J} \frac{1}{M_i^2} < \sum_{i \in J}  (V_i - V_{i+1}). 
\end{equation}

Let us now prove that $I \ne \emptyset$. For this, let us assume that $I = \emptyset$, then all iterations are non-productive, so from \eqref{main_eqII}, we can write
$$
    \frac{\gamma \varepsilon^2}{2}  \sum_{i = 0}^{k - 1} \frac{1}{M_i^2} < \sum_{i = 0}^{k - 1}  (V_i - V_{i+1}) \leq \Theta_0^2. 
$$
That is 
\begin{equation}\label{101cxz}
    \sum_{i = 0}^{k - 1} \frac{1}{M_i^2} < \frac{2 \Theta_0^2}{\gamma \varepsilon^2}. 
\end{equation}
But, form stopping criterion \eqref{stop_alg1_II}, and since $\frac{2}{\gamma^2} \left(1+ \frac{1}{\gamma}\right)^2 > \frac{2}{\gamma} \, \forall \gamma \in (0, 1]$, we find
$$
    \sum_{i = 0}^{k - 1} \frac{1}{M_i^2} \ge \frac{2 \Theta_0^2}{\gamma^2 \varepsilon^2} \left(1+ \frac{1}{\gamma}\right)^2 > \frac{2\Theta_0^2}{\gamma \varepsilon^2}. 
$$
This, we obtain a contradiction with \eqref{101cxz}. Hence $I \ne \emptyset$. 

Now, from \eqref{main_eqI}, and \eqref{main_eqII}, we find
\begin{align*}
    \gamma^2 \varepsilon  \sum_{i \in I} \frac{f(x_i) - f(x_*)}{M_i^2} + \frac{\gamma \varepsilon^2}{2}  \sum_{i \in J} \frac{1}{M_i^2} & \leq \frac{\gamma^2 \varepsilon^2}{2} \sum_{i \in I}  \frac{1}{M_i^2} + \sum_{i=0}^{k-1}  \left(V_i - V_{i+1}\right)
    \\& \leq \frac{\gamma^2 \varepsilon^2}{2} \sum_{i \in I} \frac{1}{M_i^2} + \Theta_0^2.
\end{align*}

Thus, 
\begin{equation*}
    \sum_{i \in I} \frac{f(x_i) - f(x_*)}{M_i^2}  \leq \frac{\varepsilon}{2} \sum_{i \in I} \frac{1}{M_i^2} + \frac{\Theta_0^2}{\gamma^2 \varepsilon} - \frac{\varepsilon}{2 \gamma} \sum_{i \in J} \frac{1}{M_i^2}.
\end{equation*}

We have $\hat{x} : = \arg\min_{i \in I} f(x_i)$. Thus, 
$$
    (f(\hat{x}) - f(x_*) ) \sum_{i \in I} \frac{1}{M_i^2} \leq \sum_{i \in I} \frac{f(x_i) - f(x_*)}{M_i^2} .
$$

Therefore, 
\begin{equation}\label{mainIII}
    (f(\hat{x}) - f(x_*) ) \sum_{i \in I} \frac{1}{M_i^2} \leq \frac{\varepsilon}{2} \sum_{i \in I} \frac{1}{M_i^2} + \frac{\Theta_0^2}{\gamma^2 \varepsilon} - \frac{\varepsilon}{2 \gamma} \sum_{i \in J} \frac{1}{M_i^2}. 
\end{equation}

From stopping criterion \eqref{stop_alg1_II}, we have 
$$
    \sum_{i \in I} \frac{1}{M_i^2} + \sum_{i \in J} \frac{1}{M_i^2} \ge \frac{2 \Theta_0^2}{\gamma^2 \varepsilon^2} \left(1+ \frac{1}{\gamma}\right)^2. 
$$
Thus, since $\left(1+ \frac{1}{\gamma}\right)^2  \ge \frac{1}{\gamma}$, we get 
\begin{equation}\label{main_VI}
    \frac{\Theta_0^2}{\gamma^2 \varepsilon}  \leq \left(\sum_{i \in I} \frac{1}{M_i^2} + \sum_{i \in J} \frac{1}{M_i^2}\right) \frac{\varepsilon \gamma}{2} \leq  \frac{\varepsilon \gamma }{2} \sum_{i \in I} \frac{1}{M_i^2} +\frac{\varepsilon}{2 \gamma }\sum_{i \in J} \frac{1}{M_i^2}. 
\end{equation}
where in the last, we used the fact $\frac{\gamma}{2} \le \frac{1}{2 \gamma},  \, \forall 0 < \gamma \le 1$.

Hence, from  \eqref{mainIII} and \eqref{main_VI}, we get 
\begin{equation}\label{mainV}
    (f(\hat{x}) - f(x_*) ) \sum_{i \in I} \frac{1}{M_i^2} \leq \frac{\varepsilon}{2} \sum_{i \in I} \frac{1}{M_i^2} + \frac{\varepsilon \gamma }{2} \sum_{i \in I} \frac{1}{M_i^2}. 
\end{equation}

Dividing by $\sum_{i \in I} \frac{1}{M_i^2} > 0$, since $I \ne \emptyset$, we get 
$$
    f(\hat{x}) - f(x_*) \leq \frac{\varepsilon}{2} + \frac{\varepsilon \gamma }{2} \leq \frac{\varepsilon}{2} + \frac{\varepsilon}{2} = \varepsilon. 
$$

From the definition of $\hat{x} = \arg\min_{i \in I} f(x_i)$, it s clear that $g(\hat{x}) \leq \varepsilon$. 

At the end, from \eqref{Lip_f} and \eqref{Lip_g}, we have $M_i^2 \leq \max \{M_f^2, M_g^2\}$ for all $i = 0, 1 , \ldots , k -1$. Thus, by \eqref{itersII}, we find 
$$
    \sum_{i = 0}^{k-1} \frac{1}{M_i^2} \geq  \frac{k}{\max\{M_f^2, M_g^2\}} \ge \frac{2 \Theta_0^2}{\gamma^2 \varepsilon^2}\left(1+ \frac{1}{\gamma}\right)^2.
$$
Hence, the inequality in the stopping criterion \eqref{stop_alg1_II} holds for $k$ defined in \eqref{itersII}.  This completes the proof. 
\end{proof}

\subsection{Modification of Standard Adaptive Deterministic Mirror Descent Algorithm for Problems with Many Functional Constraints.}\label{subsec_modif_groupII}

To solve problem \eqref{general_problem_1} with a large number of functional constraints, we modify Algorithm \ref{alg_determ_II} in a manner similar to that in Subsection \ref{subsec_modif_groupI}. The proposed modification is presented as Algorithm \ref{alg_determ_II_mod}.

\begin{alg}[Modification of Adaptive Deterministic Mirror Descent Algorithm for Problems With Many Functional Constraint]\label{alg_determ_II_mod}
\hfill \break

\textbf{If} $g_i(x_k) \leq \varepsilon \, \forall i = 1, \ldots, m$,
$$
    M_k = \|\nabla f(x_k)\|_{*}, \quad h_k = \frac{\gamma \varepsilon}{M_k^2}, \quad x_{k+1} = \operatorname{Mirr}_{x_k} \left( h_k \nabla f(x_k) \right).
$$

\textbf{If} there exist $q = q(k) \in \{1, \ldots, m\}$ such that $g_{q(k)}(x_k) > \varepsilon$,
$$
    M_k = \|\nabla g_{q(k)}(x_k)\|_{*}, \quad h_k = \frac{\gamma \varepsilon}{M_k^2}, \quad x_{k+1} = \operatorname{Mirr}_{x_k} \left( h_k \nabla g_{q(k)}(x_k) \right).
$$

The \textbf{Stopping criterion} is
\begin{equation}\label{stop_alg1_II_mod}
    \sum_{i = 0}^{k-1} \frac{1}{M_i^2} \geq \frac{2 \Theta_0^2}{\gamma^2 \varepsilon^2}\left(1+ \frac{1}{\gamma}\right)^2. 
\end{equation}
\end{alg}

For Algorithm \ref{alg_determ_II_mod}, we have the following result. 

\begin{theorem}\label{theo_alg1_determ_II_mod}
Let $f: Q \longrightarrow \mathbb{R}$ be a Lipschitz continuous \eqref{Lip_f}, $\gamma$-quasar convex function \eqref{def_quasar}, and $g_i: Q \longrightarrow \mathbb{R}$ be convex Lipschitz continuous functions with constants $M_{g_i}$ \eqref{Lip_g}. Assume that for all $x \in Q$, $\|\nabla f(x) \|_* \leq M_f$, and $\|\nabla g_i(x)\|_* \leq M_g \, \forall i = 1, \ldots, m$ where $M_g = \max_{1 \leq i \leq m} \{ M_{g_i} \}$. Then, with assumption \eqref{bound_bregman}, for Algorithm \ref{alg_determ_II_mod}, $I \ne \emptyset$, it stops after not more than
\begin{equation*}
    k = \left\lceil \frac{2 \Theta_0^2 \max\{M_f^2, M_g^2\}}{\gamma^2 \varepsilon^2}\left(1+ \frac{1}{\gamma}\right)^2 \right\rceil 
\end{equation*}
iterations and returns \(\hat{x}= \arg\min_{ i \in I} f(x_i)\), such that
\begin{equation*}
    f(\hat{x}) - f(x_*) \leq  \varepsilon, \quad \text{and} \quad  g(\hat{x}) \leq \varepsilon.
\end{equation*}
\end{theorem}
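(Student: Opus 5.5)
The argument follows the proof of Theorem \ref{theo_alg1_determ_II} essentially verbatim, after replacing $\nabla g(x_i)$ by $\nabla g_{q(i)}(x_i)$ on non-productive steps. Let $I$ be the set of steps where $g_i(x_k)\le\varepsilon$ for all $i$, i.e.\ $g(x_k)\le\varepsilon$, and let $J$ be the remaining steps. Put $V_i:=V(x_*,x_i)$ and $M_i$ as in Algorithm \ref{alg_determ_II_mod}.

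\textbf{Productive steps.} For $i\in I$, Lemma \ref{main_lemma_determ} (inequality \eqref{eq_2lemma}) with $h_i=\gamma\varepsilon/M_i^2$ gives exactly \eqref{4555200}. Summing over $I$ yields \eqref{main_eqI}.

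\textbf{Non-productive steps.} For $i\in J$, apply \eqref{main_lemma_1} with gradient $\nabla g_{q(i)}(x_i)$ and $y=x_*$. By convexity of $g_{q(i)}$, together with $g_{q(i)}(x_i)>\varepsilon$ and $g_{q(i)}(x_*)\le g(x_*)\le 0$ (feasibility of $x_*$), we get
\[
\varepsilon<g_{q(i)}(x_i)-g_{q(i)}(x_*)\le\langle\nabla g_{q(i)}(x_i),x_i-x_*\rangle .
\]
Substituting $h_i=\gamma\varepsilon/M_i^2$ and using $\gamma-\gamma^2/2\ge\gamma/2$ recovers \eqref{45552000}, and summing over $J$ gives \eqref{main_eqII}. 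The only new point to check is that $g_{q(i)}(x_*)\le 0$, and this holds because $g=\max_i g_i$.

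\textbf{Conclusion.} The rest is identical to the previous proof.
\begin{itemize}
\item Suppose $I=\emptyset$. Telescoping gives $\sum_{i}1/M_i^2<2\Theta_0^2/(\gamma\varepsilon^2)$, which contradicts the stopping rule \eqref{stop_alg1_II_mod} because $\frac{2}{\gamma^2}(1+\frac1\gamma)^2>\frac2\gamma$. Hence $I\ne\emptyset$.
\item Add \eqref{main_eqI} and \eqref{main_eqII}, and bound $\sum_{i=0}^{k-1}(V_i-V_{i+1})\le V_0\le\Theta_0^2$. Using $f(\hat x)\le f(x_i)$ for $i\in I$ gives an analogue of \eqref{mainIII}.
\item The stopping criterion gives the analogue of \eqref{main_VI}: $\Theta_0^2/(\gamma^2\varepsilon)$ is at most $\frac{\varepsilon\gamma}{2}\sum_I 1/M_i^2+\frac{\varepsilon}{2\gamma}\sum_J 1/M_i^2$. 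This absorbs the $\Theta_0^2$ term.
\item Divide by $\sum_I 1/M_i^2>0$ to get $f(\hat x)-f(x_*)\le \varepsilon/2+\gamma\varepsilon/2\le\varepsilon$.
\end{itemize}
Since $\hat x$ is chosen among productive points, $g_i(\hat x)\le\varepsilon$ for every $i$, so $g(\hat x)\le\varepsilon$.

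\textbf{Iteration bound.} The assumptions give $M_i\le\max\{M_f,M_g\}$ for all steps, so $\sum_{i<k}1/M_i^2\ge k/\max\{M_f^2,M_g^2\}$. The stopping criterion is therefore met by the stated $k$.

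None of these steps is hard. The only place needing care is the non-productive bound, which must use the individual constraint $g_{q(i)}$ rather than $g$ and its bound $\|\nabla g_{q(i)}\|_*\le M_g$.
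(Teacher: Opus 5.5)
Your proposal is correct and follows the paper's route: the paper's proof reruns the argument of Theorem~\ref{theo_alg1_determ_II} with $\nabla g_{q(k)}$ in place of $\nabla g$ on non-productive steps. You also make explicit the two facts this substitution needs, which the paper leaves implicit: $g_{q(i)}(x_*)\le 0$ (feasibility of $x_*$) for the non-productive bound, and $\|\nabla g_{q(i)}(x_i)\|_*\le M_g$ for the iteration count.
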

\begin{proof}
The proof follows the same structure as that of Theorem \ref{theo_alg1_determ_II}, with a minor modification in the part concerning non-productive steps. Specifically, in analogy with the proof of Theorem \ref{theo_modifalg1_determ}, instead of using $g(\cdot) = \max_{1 \leq i \leq m} \{g_i(\cdot)\}$, we consider the constraint $g_{q(k)}$ that violates feasibility.
\end{proof}

\subsection{Standard Non-Adaptive Deterministic Mirror Descent Algorithm for Problems with One Functional Constraint}

\begin{alg}[Standard Non-Adaptive Deterministic Mirror Descent Algorithm for Problems with One Functional Constraint]\label{alg2_determ_II}
\hfill \break

\textbf{Step size.} Let $M: = \max\{M_f, M_g\}$, then $h_k := h = \frac{\gamma \varepsilon }{M^2}$

\smallskip 

\textbf{If} $g(x_k) \leq \varepsilon$,
$$
    M_k = \|\nabla f(x_k)\|_{*}, \quad x_{k+1} = \operatorname{Mirr}_{x_k} \left( h \nabla f(x_k) \right).
$$

\textbf{If} $g(x_k) > \varepsilon$,
$$
    M_k = \|\nabla g(x_k)\|_{*},  \quad x_{k+1} = \operatorname{Mirr}_{x_k} \left( h \nabla g(x_k) \right).
$$

The \textbf{Stopping criterion} is
\begin{equation}\label{stop_alg2_II}
    k \geq \frac{2 \Theta_0^2 M^2}{\gamma^2 \varepsilon^2}\left(1+ \frac{1}{\gamma}\right)^2. 
\end{equation}
\end{alg}

Here, we note that for solving problem \eqref{general_problem_1} with a large number of functional constraints, Algorithm \ref{alg2_determ_II} can be modified in a manner analogous to those presented in Subsections \ref{subsec_modif_groupI} and \ref{subsec_modif_groupII}. For the resulting modified algorithm, the same result as in Theorem \ref{theo_alg2_determ_II} holds, with an identical proof except for a minor adjustment in the treatment of non-productive steps. Specifically, in analogy with the previous modifications, we replace $g(\cdot) = \max_{1 \leq i \leq m} \{g_i(\cdot)\}$ with the particular violating constraint $g_{q(k)}$.

\medskip 

For Algorithm \ref{alg2_determ_II}, we have the following result. 

\begin{theorem}\label{theo_alg2_determ_II}
Let $f: Q \longrightarrow \mathbb{R}$ be a Lipschitz continuous \eqref{Lip_f}, $\gamma$-quasar convex function \eqref{def_quasar}, and $g: Q \longrightarrow \mathbb{R}$ be a convex Lipschitz continuous function \eqref{Lip_g}. Then, with assumption \eqref{bound_bregman}, for Algorithm \ref{alg2_determ_II}, $I \ne \emptyset$, it stops after not more than
\begin{equation}\label{itersII_stoc}
    k = \left\lceil \frac{2 \Theta_0^2 M^2}{\gamma^2 \varepsilon^2}\left(1+ \frac{1}{\gamma}\right)^2 \right\rceil 
\end{equation}
iterations and returns \(\hat{x}= \arg\min_{ i \in I} f(x_i)\), such that
\begin{equation*}
    f(\hat{x}) - f(x_*) \leq  \varepsilon, \quad \text{and} \quad  g(\hat{x}) \leq \varepsilon.
\end{equation*}
\end{theorem}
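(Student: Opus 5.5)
The plan is to mirror the proof of Theorem \ref{theo_alg1_determ_II}, with the adaptive weights $1/M_i^2$ replaced by the constant $1/M^2$ coming from the fixed step $h=\gamma\varepsilon/M^2$. Throughout, write $V_i := V(x_*, x_i)$, so that $0\le V_i\le\Theta_0^2$ by \eqref{eq_breg} and \eqref{bound_bregman}. We also use that $\|\nabla f(x_i)\|_*\le M_f\le M$ and $\|\nabla g(x_i)\|_*\le M_g\le M$.

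\textbf{Productive steps.} For $i\in I$ we apply Lemma \ref{main_lemma_determ}, inequality \eqref{eq_2lemma}, and bound $M_i^2\le M^2$. This gives
\[
h\bigl(f(x_i)-f(x_*)\bigr)\le \frac{1}{\gamma}\Bigl(\frac{h^2M^2}{2}+V_i-V_{i+1}\Bigr),
\]
which with $h=\gamma\varepsilon/M^2$ becomes
\[
\frac{\gamma^2\varepsilon}{M^2}\bigl(f(x_i)-f(x_*)\bigr)\le \frac{\gamma^2\varepsilon^2}{2M^2}+V_i-V_{i+1}.
\]

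\textbf{Non-productive steps.} For $i\in J$ we use \eqref{main_lemma_1} together with convexity of $g$, $g(x_i)>\varepsilon$ and $g(x_*)\le 0$. This gives $h\varepsilon<\frac{h^2M^2}{2}+V_i-V_{i+1}$, that is,
\[
\Bigl(\gamma-\frac{\gamma^2}{2}\Bigr)\frac{\varepsilon^2}{M^2}<V_i-V_{i+1},
\]
and hence $\frac{\gamma\varepsilon^2}{2M^2}<V_i-V_{i+1}$ because $\gamma\le1$.

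\textbf{Summing and $I\neq\emptyset$.} We sum over all $i=0,\dots,k-1$ and telescope, bounding the total by $V_0-V_k\le\Theta_0^2$. To show $I\neq\emptyset$, suppose instead that $I=\emptyset$. Then the sum gives $k\,\gamma\varepsilon^2/(2M^2)<\Theta_0^2$, i.e.\ $k<2\Theta_0^2M^2/(\gamma\varepsilon^2)$. This contradicts the stopping rule \eqref{stop_alg2_II}, since $\frac{2}{\gamma^2}(1+\frac1\gamma)^2>\frac{2}{\gamma}$.

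\textbf{Accuracy bound.} With $I\neq\emptyset$, the summed inequality reads
\[
\frac{\gamma^2\varepsilon}{M^2}\sum_{i\in I}\bigl(f(x_i)-f(x_*)\bigr)+\frac{\gamma\varepsilon^2}{2M^2}|J|\le \frac{\gamma^2\varepsilon^2}{2M^2}|I|+\Theta_0^2 .
\]
Using $f(x_i)\ge f(\hat x)$ for $i\in I$, we obtain
\[
\bigl(f(\hat x)-f(x_*)\bigr)|I|\le\frac{\varepsilon}{2}|I|+\frac{\Theta_0^2M^2}{\gamma^2\varepsilon}-\frac{\varepsilon}{2\gamma}|J|.
\]
The stopping rule $k=|I|+|J|\ge \frac{2\Theta_0^2M^2}{\gamma^2\varepsilon^2}(1+\frac1\gamma)^2$ together with $(1+\frac1\gamma)^2\ge\frac1\gamma$ gives
\[
\frac{\Theta_0^2M^2}{\gamma^2\varepsilon}\le\frac{\varepsilon\gamma}{2}\bigl(|I|+|J|\bigr)\le \frac{\varepsilon\gamma}{2}|I|+\frac{\varepsilon}{2\gamma}|J| .
\]
Substituting, the $|J|$ terms cancel, and dividing by $|I|>0$ yields $f(\hat x)-f(x_*)\le\frac{\varepsilon}{2}(1+\gamma)\le\varepsilon$. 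The bound $g(\hat x)\le\varepsilon$ holds because $\hat x$ is chosen among productive points.

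\textbf{Iteration count.} The stopping rule \eqref{stop_alg2_II} is non-adaptive, so the algorithm stops exactly at the first integer $k$ satisfying it, which is \eqref{itersII_stoc}.

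The only step needing care is the final algebra: the coefficient $\frac{\Theta_0^2 M^2}{\gamma^2\varepsilon}$ has to be absorbed by the stopping rule in a way that cancels the negative $|J|$ term. This works exactly as in Theorem \ref{theo_alg1_determ_II}, with every $1/M_i^2$ replaced by $1/M^2$. The one difference is that the inequality $M_i^2\le M^2$ is now used inside the lemma's quadratic term rather than being built into the step size.
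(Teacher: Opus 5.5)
Your proposal is correct and follows the paper's route. The paper proves this theorem only by saying it is the same as the proof of Theorem \ref{theo_alg1_determ_II}, and you carry out that transcription faithfully: $1/M_i^2$ becomes $1/M^2$ via $\|\nabla f(x_i)\|_*,\|\nabla g(x_i)\|_*\le M$ in the quadratic term, the contradiction argument gives $I\neq\emptyset$, and the stopping rule absorbs $\Theta_0^2M^2/(\gamma^2\varepsilon)$ so that the $|J|$ terms cancel. Multiplying each productive-step inequality by $\gamma$ before summing, so that every $V_i-V_{i+1}$ has coefficient one, is what makes the telescoping to $\Theta_0^2$ valid; the paper's stochastic proof, which it also cites for this theorem, telescopes $\frac{1}{\gamma}\sum_{i\in I}(V_i-V_{i+1})+\sum_{i\in J}(V_i-V_{i+1})$ as if the coefficients were equal, so your version is the cleaner one.
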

\begin{proof}
The proof is the same of the proof of Theorem \ref{theo_alg1_determ_II}. See also the proof of the next theorem for the stochastic version of Algorithm \ref{alg2_determ_II}. 
\end{proof}

\subsection{Standard Non-Adaptive Stochastic Mirror Descent Algorithm for Problems with One Functional Constraint.}\label{subsec_stoc_stand_GII}

To solve the stochastic formulation of problem \eqref{general_problem}, we propose, in a manner similar to Subsection \ref{subsec_stoc_stand_GI}, a stochastic version of Algorithm \ref{alg2_determ_II}, presented as Algorithm \ref{alg2_stoc_II}. Unfortunately, it is not evident how to prove convergence for a stochastic version of the adaptive Algorithm \ref{alg_determ_II}. This limitation is the primary reason for introducing the non-adaptive Algorithm \ref{alg2_stoc_II}, as its stochastic counterpart admits a rigorous convergence analysis.

\begin{alg}[Standard Non-Adaptive Stochastic Mirror Descent Algorithm for Problems with One Functional Constraint]\label{alg2_stoc_II}
\hfill \break

\textbf{Step size.} Let $M: = \max\{M_f, M_g\}$, then $h_k := h = \frac{\gamma \varepsilon }{M^2}$

\smallskip 

\textbf{If} $g(x_k) \leq \varepsilon$,
$$
    M_k = \|\nabla f(x_k, \xi_k)\|_{*},  \quad x_{k+1} = \operatorname{Mirr}_{x_k} \left( h_k \nabla f(x_k, \xi_k) \right).
$$

\textbf{If} $g(x_k) > \varepsilon$,
$$
    M_k = \|\nabla g(x_k, \zeta_k)\|_{*}, \quad x_{k+1} = \operatorname{Mirr}_{x_k} \left( h_k \nabla g(x_k, \zeta_k) \right).
$$

The \textbf{Stopping criterion} is
\begin{equation}\label{stop_alg1_II_stoc}
    k \geq \frac{2 \Theta_0^2M^2}{\gamma^2 \varepsilon^2}\left(1+ \frac{1}{\gamma}\right)^2. 
\end{equation}
\end{alg}

Here also, we note that for problem \eqref{general_problem_1} in the stochastic setting involving a large number of functional constraints, Algorithm \ref{alg2_stoc_II} admits a modification similar to that in Subsection \ref{modif_stoc_GI}. The modified algorithm achieves the same convergence guarantee as Theorem~\ref{theo_alg1_stoc_II}, with a proof that is otherwise identical except for the treatment of non-productive steps, where we replace $g(\cdot) = \max_{1 \leq i \leq m} \{g_i(\cdot)\}$ with the violating constraint $g_{q(k)}$.

\medskip 

For Algorithm \ref{alg2_stoc_II}, we have the following result. 

\begin{theorem}\label{theo_alg1_stoc_II}
Let $f: Q \longrightarrow \mathbb{R}$ be a $\gamma$-quasar convex function \eqref{def_quasar}, and $g: Q \longrightarrow \mathbb{R}$ be a convex Lipschitz continuous function. Assume that \eqref{bound_stoc_f_g_1}, and \eqref{bound_stoc_f_g_2} hold. Then, with assumption \eqref{bound_bregman}, for Algorithm \ref{alg2_stoc_II}, $I \ne \emptyset$, it stops after not more than
\begin{equation*}\label{itersII_stoc_s}
    k = \left\lceil \frac{2 \Theta_0^2 M^2}{\gamma^2 \varepsilon^2}\left(1+ \frac{1}{\gamma}\right)^2 \right\rceil 
\end{equation*}
iterations and returns \(\hat{x}= \arg\min_{ i \in I} f(x_i)\), such that
\begin{equation*}
    \mathbb{E}[f(\hat{x})] - f(x_*) \leq  \varepsilon, \quad \text{and} \quad  g(\hat{x}) \leq \varepsilon.
\end{equation*}
\end{theorem}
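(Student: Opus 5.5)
The plan is to reproduce the argument of Theorem \ref{theo_alg1_determ_II}, now with the constant step $h=\gamma\varepsilon/M^2$, carrying the stochastic noise through as an extra additive term. That term is then removed by taking expectations.

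\textbf{Per-step inequalities.} For each $i$, apply Lemma \ref{main_lemma_stoch} (inequality \eqref{main_lemma_stoc}) with $y=x_*$ and $\nabla_i$, $\Delta_i$ as in \eqref{nabla_i_Delta_i_stoc}. The bound \eqref{bound_stoc_f_g_2} gives $\|\nabla_i\|_*^2\le M^2$, so $\frac{h^2}{2}\|\nabla_i\|_*^2\le \frac{\gamma^2\varepsilon^2}{2M^2}$. Write $\beta_i:=\langle\Delta_i,x_*-x_i\rangle$.

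For $i\in I$, quasar convexity \eqref{def_quasar} gives
\[
\gamma h\,(f(x_i)-f(x_*))\le \frac{h^2}{2}M^2+V_i-V_{i+1}+h\beta_i .
\]
Substituting $h=\gamma\varepsilon/M^2$ and multiplying by $\gamma$, this becomes
\[
\frac{\gamma^2\varepsilon}{M^2}\,(f(x_i)-f(x_*))\le \frac{\gamma^2\varepsilon^2}{2M^2}+V_i-V_{i+1}+\frac{\gamma\varepsilon}{M^2}\beta_i .
\]
For $i\in J$, convexity of $g$, together with $g(x_i)>\varepsilon\ge 0\ge g(x_*)$, gives
\[
\frac{\gamma\varepsilon^2}{M^2}<\frac{\gamma^2\varepsilon^2}{2M^2}+V_i-V_{i+1}+\frac{\gamma\varepsilon}{M^2}\beta_i .
\]
Since $\gamma\le1$, this yields $\frac{\gamma\varepsilon^2}{2M^2}<V_i-V_{i+1}+\frac{\gamma\varepsilon}{M^2}\beta_i$.

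\textbf{Summation.} Sum both families over $i=0,\dots,k-1$ and telescope, using $\sum(V_i-V_{i+1})\le V_0\le\Theta_0^2$. Bound the $f$-gaps below using $f(\hat x)\le f(x_i)$ for $i\in I$. This gives
\[
\frac{\gamma^2\varepsilon|I|}{M^2}\,(f(\hat x)-f(x_*))+\frac{\gamma\varepsilon^2|J|}{2M^2}\le \frac{\gamma^2\varepsilon^2|I|}{2M^2}+\Theta_0^2+\frac{\gamma\varepsilon}{M^2}\sum_{i=0}^{k-1}\beta_i .
\]
The stopping rule \eqref{stop_alg1_II_stoc} gives $k\ge \frac{2\Theta_0^2M^2}{\gamma^2\varepsilon^2}(1+\frac1\gamma)^2$. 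Because $(1+1/\gamma)^2\ge 1/\gamma$, and as in \eqref{main_VI}, this implies
\[
\Theta_0^2\le \frac{\gamma^2\varepsilon^2 k}{2M^2}\le\frac{\gamma^3\varepsilon^2|I|}{2M^2}+\frac{\gamma\varepsilon^2|J|}{2M^2}.
\]
The $|J|$ term then cancels, and dividing by $\gamma^2\varepsilon|I|/M^2$ gives
\[
f(\hat x)-f(x_*)\le \frac{\varepsilon}{2}+\frac{\gamma\varepsilon}{2}+\frac{1}{\gamma|I|}\sum_{i=0}^{k-1}\beta_i .
\]
Taking expectations and using $\mathbb{E}\bigl[\sum_i\beta_i/|I|\bigr]=0$, with the same justification as in the proof of Theorem \ref{theorem_stoc_adaptive} (via \cite{Bayandina2017Stochastic}), gives $\mathbb{E}[f(\hat x)]-f(x_*)\le\varepsilon$. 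The bound $g(\hat x)\le\varepsilon$ holds because $\hat x$ is chosen from $I$. The iteration count is immediate, since the stopping rule is a deterministic bound on $k$.

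\textbf{$I\neq\emptyset$ (expected main obstacle).} If $I=\emptyset$, the $J$-inequalities sum to
\[
\frac{\gamma\varepsilon^2k}{2M^2}<\Theta_0^2+\frac{\gamma\varepsilon}{M^2}\sum_i\beta_i .
\]
Unlike the deterministic case, the noise term prevents an immediate contradiction with the stopping rule. However, on non-productive steps the test $g(x_i)>\varepsilon$ uses exact values of $g$, so the $\beta_i$ form a bounded martingale-difference sequence with $|\beta_i|\le 2M\sqrt2\,\Theta_0$. I would first try a contradiction in expectation, comparing $\mathbb{E}$ of both sides with $k$ from \eqref{stop_alg1_II_stoc}. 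If that fails to deliver an almost-sure statement, I would state $I\ne\emptyset$ with high probability via Azuma--Hoeffding, since the margin $\frac{\gamma\varepsilon^2k}{2M^2}-\Theta_0^2$ grows linearly in $k$. A second subtlety of the same kind is that $|I|$ is random, so dividing by it before taking expectations needs the same martingale justification already used earlier in the paper.
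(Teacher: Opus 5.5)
Your argument follows the paper's proof step for step. You use the same per-step inequalities from Lemma~\ref{main_lemma_stoch} with the same $\beta_i$, telescope over all $k$ steps, and use the stopping rule \eqref{stop_alg1_II_stoc} to absorb $\Theta_0^2$ and cancel the $|J|$-terms. You then remove the noise by expectation, and for $I\neq\emptyset$ you try first exactly the contradiction in expectation the paper uses.

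Your bookkeeping is actually cleaner than the paper's. You keep $\gamma h\,(f(x_i)-f(x_*))$ on the left instead of dividing the productive inequality by $\gamma$. So on every step the Bregman terms have coefficient $1$ and the noise has coefficient $\gamma\varepsilon/M^2$, which makes the telescoping and the single sum $\sum_{i=0}^{k-1}\beta_i$ legitimate. When the paper adds its two sums, it silently drops the factor $1/\gamma$ in front of $\sum_{i\in I}(V_i-V_{i+1})$, a partial sum with no sign. It also enlarges $h\sum_{i\in J}\beta_i$ to $\frac{h}{\gamma}\sum_{i\in J}\beta_i$ pathwise.

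One slip of yours: the middle term in your stopping-rule chain must be $\frac{\gamma^3\varepsilon^2k}{2M^2}$, which is what $(1+1/\gamma)^2\ge 1/\gamma$ gives. With $\gamma^2$ the next inequality fails for $\gamma<1$. The endpoint you actually use is correct.

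The two issues you flagged are genuine gaps, but they are defects of the statement, and the paper's proof does not resolve them. The contradiction in expectation is invalid. The summed inequality is available only on the noise-dependent event $\{I=\emptyset\}$, and restricted to that event $\sum_i\beta_i$ need not have mean zero.

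In fact $I\neq\emptyset$ can fail with positive probability. Take $Q=[-1,1]$, $d(x)=x^2/2$, and $f(x)=|x+\tfrac12|$, which is convex and hence quasar convex with feasible center $x_*=-\tfrac12$. Take $g(x)=x$ with stochastic subgradient $1+\zeta$, where $\zeta=\pm1$ with equal probability, so $M=2$. Let $x_0=\tfrac12$ and $\varepsilon<\tfrac12$. On the event $\{\zeta_0=\dots=\zeta_{k-1}=-1\}$, which has probability $2^{-k}>0$, every update is $\operatorname{Mirr}_{x_i}(0)=x_i$, so every step is non-productive. Hence no argument can give $I\neq\emptyset$ almost surely.

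Your Azuma--Hoeffding fallback is the right repair, but $k$ is not free. At the value fixed by \eqref{stop_alg1_II_stoc}, compare the margin $\frac{\gamma\varepsilon^2k}{2M^2}-\Theta_0^2$ with the deviation scale $\frac{\gamma\varepsilon}{M^2}\,M\Theta_0\sqrt{k}$ of the noise term. Their ratio is of order $(1+\gamma)/\gamma^2$, independent of $\varepsilon$. So Azuma yields only a $\gamma$-dependent constant failure probability. To get $\Pr(I=\emptyset)\le\delta$ you must lengthen the run by a factor of order $\log(1/\delta)$, which changes the iteration bound in the statement.

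Likewise, $\mathbb{E}\bigl[\sum_i\beta_i/|I|\bigr]=0$ is not a martingale identity, because $|I|$ depends on samples drawn after step $i$. The paper's own version writes $|I|\,(\mathbb{E}[f(\hat x)]-f(x_*))$, treating $|I|$ as deterministic, and has the same defect.

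What your pathwise inequality does give rigorously is the following. Since $k$ is deterministic and the branch at step $i$ is decided by the exact value $g(x_i)$, you have $\mathbb{E}[\sum_{i=0}^{k-1}\beta_i]=0$. This yields the $|I|$-weighted bound
\[
\mathbb{E}\Bigl[\,|I|\,\bigl(f(\hat x)-f(x_*)-\tfrac{1+\gamma}{2}\,\varepsilon\bigr)\Bigr]\le 0 .
\]
Here the product is read as $0$ on $\{I=\emptyset\}$, where your summed inequality forces $\sum_i\beta_i>0$, so the pathwise bound still holds there. The same Azuma estimate gives a high-probability version. Obtaining the stated $\mathbb{E}[f(\hat x)]-f(x_*)\le\varepsilon$ requires changing either the output rule or the statement.
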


\begin{proof}
Let us define 
\[
    \beta_i := 
    \begin{cases}
     \langle \nabla f(x_i, \xi_i) - \nabla f(x_i),  x_* - x_i \rangle & \text{if } \, i \in I,\\
     \langle \nabla g(x_i, \zeta_i) - \nabla g(x_i),   x_* - x_i \rangle & \text{if } \, i \in J.
    \end{cases}
\]
Also, let \(V_i := V(x_*, x_i)\). 

For any $i \in I$, applying Lemma \ref{main_lemma_stoch} (inequality \eqref{eq_2lemma_stoch} with $\Delta_i = \nabla f(x_i, \xi_i) - \nabla f(x_i)$), we get
\begin{align*}
     & \quad \; h (f(x_i) - f(x_*)) + \frac{h}{\gamma} \langle \nabla f(x_i, \xi_i) - \nabla f(x_i), x_i - x_*  \rangle  
     \\& \leq \frac{1}{\gamma}\left(\frac{h^2}{2} \|\nabla f(x_i, \xi_i)\|_*^2 + V_i - V_{i+1}\right).
\end{align*}
Since $\left\langle \nabla f(x_i,\xi_i)-\nabla f(x_i), x_i-x_* \right\rangle =-\beta_i,$ and $\|\nabla f(x_i,\xi_i)\|_*\le M,$ we get
\begin{equation}\label{zz1}
    h(f(x_i)-f(x_*)) \le \frac1\gamma \left( \frac{h^2M^2}{2} + V_i-V_{i+1} \right) + \frac{h}{\gamma}\beta_i.
\end{equation}

Summing up both sides of \eqref{zz1}, over all $i\in I$, we obtain
\begin{align} \label{eq_prod}
    h\sum_{i\in I}(f(x_i)-f(x_*)) \le \frac{|I|h^2M^2}{2\gamma} + \frac1\gamma \sum_{i\in I}(V_i-V_{i+1}) + \frac{h}{\gamma} \sum_{i\in I}\beta_i.
\end{align}

For any $i \in J$, applying Lemma \ref{main_lemma_stoch} (inequality \eqref{main_lemma_stoc} with $\Delta_i = \nabla g(x_i, \zeta_i) - \nabla g(x_i)$, and $y = x_*$), we get
$$
    h \langle \nabla g(x_i, \zeta_i) , x_i - x_* \rangle \le \frac{h^2}{2} \|\nabla g(x_i, \zeta_i)\|_*^2 + V_i - V_{i+1}. 
$$
Thus, 
$$
    h \langle \nabla g(x_i), x_i - x_* \rangle + h \langle \nabla g(x_i, \zeta_i) - \nabla g(x_i), x_i - x_* \rangle \le \frac{h^2}{2} \|\nabla g(x_i, \zeta_i)\|_*^2 + V_i - V_{i+1}. 
$$

Since $ \langle \nabla g(x_i, \zeta_i) - \nabla g(x_i), x_i - x_* \rangle = -\beta_i$,  and $\|\nabla g(x_i,\zeta_i)\|_*\le M,$ we get
\[
    h \left\langle \nabla g(x_i), x_i-x_* \right\rangle \le \frac{h^2M^2}{2} + V_i-V_{i+1} + h\beta_i.
\]

Since $g$ is convex, $g(x_i)>\varepsilon$ for all $i\in J$, and $g(x_*)\le0$, we get
\[
    \varepsilon < g(x_i)-g(x_*) \le \left\langle \nabla g(x_i), x_i-x_* \right\rangle.
\]
Therefore,
\begin{equation}\label{wwx1}
    h\varepsilon  <  \frac{h^2M^2}{2} + V_i-V_{i+1} + h\beta_i.
\end{equation}

Summing up both sides of \eqref{wwx1}, over all $i\in J$, we obtain
\begin{align}\label{eq_nonprod}
    |J|h\varepsilon < \frac{|J|h^2M^2}{2} + \sum_{i\in J}(V_i- V_{i+1}) + h\sum_{i\in J}\beta_i.
\end{align}

Adding \eqref{eq_prod} and \eqref{eq_nonprod}, we obtain
\begin{align*}
    h\sum_{i\in I}(f(x_i)-f(x_*)) + |J|h \varepsilon & \le
    \frac{|I|h^2M^2}{2\gamma} + \frac{|J|h^2M^2}{2} + \sum_{i=0}^{k-1}(V_i-V_{i+1}) 
    \\& \quad + \frac{h}{\gamma}\sum_{i\in I}\beta_i +h\sum_{i\in J}\beta_i.
\end{align*}

Since $0<\gamma\le1$, we have $\frac{|J|h^2M^2}{2} \le \frac{|J|h^2M^2}{2\gamma}$, and $h \le \frac{h}{\gamma}$. Hence,
\begin{align}\label{eq_main}
    h\sum_{i\in I}(f(x_i)-f(x_*)) + |J|h\varepsilon & \le \frac{kh^2M^2}{2\gamma} + \Theta_0^2 + \frac{h}{\gamma} \sum_{i=0}^{k-1}\beta_i \nonumber
    \\& = \frac{kh\varepsilon}{2} + \Theta_0^2 + \frac{h}{\gamma} \sum_{i=0}^{k-1}\beta_i.
\end{align}

From the definition of $\hat{x}:=\arg\min_{i\in I}f(x_i)$, and \eqref{eq_main}, we obtain 
\begin{equation}\label{eq_main_main}
    |I|(f(\hat{x})-f(x_*)) + |J|\varepsilon \le \frac{k\varepsilon}{2} + \frac{\Theta_0^2}{h} + \frac{1}{\gamma} \sum_{i=0}^{k-1}\beta_i. 
\end{equation}

By taking the expectation for both sides of \eqref{eq_main_main}, and since $\mathbb{E} \left[\sum_{i=0}^{k-1}\beta_i\right] = 0$, we get 
\begin{align*}
    |I|(\mathbb{E}[f(\hat{x})]-f(x_*)) + |J| \varepsilon \le \frac{k \varepsilon}{2} + \frac{\Theta_0^2}{h}.
\end{align*}

That is, 
\begin{align}\label{eq_hat_hat}
    |I|(\mathbb{E}[f(\hat{x})]-f(x_*)) \le \frac{|I|\varepsilon}{2} - \frac{|J|\varepsilon}{2} + \frac{\Theta_0^2}{h}.
\end{align}

From the stopping criterion $k \ge\frac{2\Theta_0^2M^2}{ \gamma^2\varepsilon^2} \left(1+\frac1\gamma\right)^2,$ and since $ \frac{2}{\gamma^2} \left(1+\frac1\gamma\right)^2 > \frac{2}{\gamma} \;\forall \gamma \in (0, 1]$, we obtain $\frac{\Theta_0^2}{h} \le \frac{k \varepsilon}{2} = \frac{(|I| + |J|) \varepsilon}{2} $. Hence, from \eqref{eq_hat_hat} we obtain the desired result, 
\[
    \mathbb{E}[f(\hat{x})]-f(x_*) \le \varepsilon.
\]

Since $\hat{x}$ is chosen among productive iterates, we obtain $g(\hat{x})\le \varepsilon.$

Finally, let us prove that $I\neq \emptyset$.  Assume by contradiction that $I=\emptyset$. Then all iterations are non-productive, and therefore $|J|=k$. 

From \eqref{eq_nonprod}, we obtain
\begin{equation*}
    kh\varepsilon  < \frac{k h^2 M^2}{2} + \sum_{i=0}^{k-1}(V_i- V_{i+1}) + h\sum_{i=0}^{k-1}\beta_i \leq \frac{kh^2M^2}{2} + \Theta_0^2 + h\sum_{i=0}^{k-1}\beta_i.
\end{equation*}

Taking the expectation of both sides of the previous inequality, and using $\mathbb{E}\left[\sum_{i=0}^{k-1}\beta_i\right]=0,$ we get
\[
    k h \varepsilon < \frac{k h^2 M^2}{2} + \Theta_0^2 \Longrightarrow \frac{k \gamma \varepsilon^2}{M^2} < \frac{k \gamma^2 \varepsilon^2}{2 M^2} + \Theta_0^2 \Longrightarrow \frac{k \gamma \varepsilon^2}{M^2} \left( 1-\frac{\gamma}{2} \right) < \Theta_0^2.
\]

Since $0<\gamma\le1$, we have $1-\frac{\gamma}{2}\ge \frac{1}{2}.$ Thus,
\begin{equation}\label{fdfd00}
    \frac{k\gamma\varepsilon^2}{2M^2} < \Theta_0^2 \quad \Longrightarrow \quad  k < \frac{2\Theta_0^2M^2}{\gamma\varepsilon^2}.
\end{equation}

On the other hand, From the stopping criterion $k \ge \frac{2 \Theta_0^2 M^2}{\gamma^2 \varepsilon^2} \left(1 + \frac{1}{\gamma}\right)^2,$ and Since $\frac1{\gamma^2}\left(1+\frac1\gamma\right)^2 > \frac1\gamma
\, \forall \gamma\in(0,1],$ we obtain $k> \frac{2 \Theta_0^2 M^2}{\gamma \varepsilon^2},$ which contradicts \eqref{fdfd00}. Hence, $I\neq \emptyset.$ This completes the proof.
\end{proof}


\section{Conclusions}

In this paper, we studied quasar convex optimization problems with functional (inequality-type) constraints. We proposed and analyzed the convergence rates of two groups of algorithms (standard and modified versions for each algorithm), each comprising different mirror descent-type methods in deterministic and stochastic settings with switching between productive and non-productive iteration points. As directions for future research, we plan to extend our approach to problems with strongly quasar convex objectives. Additionally, we aim to analyze the proposed algorithms under an inexact oracle framework, specifically using $\delta$-subgradients.


\end{document}